\definecolor{mygreen}{HTML}{43a047}
\newcommand\usolspace{\mathcal{U}^r}
\newcommand\Lconv{\ast}
\newcommand{\bfmu}{\boldsymbol{\mu}}
\newcommand{\bfxi}{\boldsymbol{\xi}}
\def \frakK{\mathfrak{K}}
\def\calM{\mathcal{M}}
\def\ulal{\underline{\mathfrak{a}}}
\def\olal{\overline{\mathfrak{a}}}
\newcommand{\Om}{\Omega}
\newcommand{\eps}{\varepsilon}
\DeclareMathOperator*{\esssup}{ess\,sup}
\def\tCinv{\tilde{C}_{\textup{inv}}}
\newcommand{\CHonethree}{C_{H^1 \hookrightarrow L^3}}
\newcommand{\ut}{u_t}
\newcommand{\utt}{u_{tt}}
\newcommand{\uttt}{u_{ttt}}
\newcommand{\ds}{\, \textup{d} s }
\newcommand{\dxs}{\, \textup{d}x\textup{d}s}
\newcommand{\inttO}{\int_0^t \int_{\Omega}}
\newcommand{\intt}{\int_0^t}
\newcommand{\intO}{\int_{\Omega}}
\newcommand{\R}{\mathbb{R}} 
\newcommand{\Hone}{H^1(\Omega)}
\def\Ltwo{L^2(\Omega)}
\def\Lthree{L^3(\Omega)}
\def\Lsix{L^6(\Omega)}
   \def\Linf{L^\infty(\Omega)}
  \def \Clin{C_{\textup{lin}}}
\newtheorem{theorem}{Theorem}
\newtheorem{proposition}{Proposition}
\newtheorem{corollary}{Corollary}
\numberwithin{lemma}{section}
\numberwithin{proposition}{section}
\numberwithin{theorem}{section}
\numberwithin{equation}{section}
\newcommand{\leqnomode}{\tagsleft@true}
\newcommand{\reqnomode}{\tagsleft@false}
\newcommand{\Hrzero}{{H^r(\Omega)\cap H_0^1(\Omega)}}
\newcommand{\Hr}{{H^r(\Omega)}}
\definecolor{grey}{rgb}{0.5,0.5,0.5}
\newcommand{\ueps}{u^{\varepsilon}}
\newcommand{\uteps}{u_t^{\varepsilon}}
\newcommand{\calF}{\mathcal{F}}
\newcommand{\Bh}{B_h}
\def\aaa{\mathfrak{a}}
\def\aaalin{\mathfrak{a}}
\def\aaalint{\mathfrak{a}_t}
\def\aaalintt{\mathfrak{a}_{tt}}
\def\psih{u_h}
\def\psiht{u_{ht}}
\def\alphah{\alpha_h}
\def\alphaht{\alpha_{ht}}
\def\alphahtt{\alpha_{htt}}
\def\psihtt{u_{htt}}
\def\uh{u_h}
\def\uht{u_{ht}}
\def\uhtt{u_{htt}}
\def\eh{e_h}
\def\eht{e_{ht}}
\def \Rh{\mathcal{R}_h}
\def \Ih{I_h}
\def \Cinv{C_{\textup{inv}}}
\def \Capproxint{C_{\textup{app}}^{I_h}}
\def \Capproxritz{C_{\textup{app}}^{\mathcal{R}_h}}
\def \Capproxritzz{C_{\textup{app}}^{\mathcal{R}_h}}
\def \Cstint{C_{\textup{st}}^{I_h}}
\definecolor{darkgreen}{rgb}{0,0.5,0}
\theoremstyle{definition}
\newtheorem{exmp}{Example}[section]
\newcommand{\revisedd}[2]{#2}
\newcommand{\reviseV}[1]{\color{black} #1}  
\pgfplotsset{compat=1.17}
\title[Asymptotic-preserving finite element analysis of Westervelt equations]{Asymptotic-preserving finite element analysis of Westervelt-type wave equations}
\subjclass[2020]{35L72, 65M60}
\keywords{asymptotic-preserving methods, nonlinear ultrasonics, Westervelt's equation, fractional damping}
\author[V.\ Nikoli\'c]{Vanja Nikoli\'c}
\address{ 
	Department of Mathematics, 
	Radboud University   \\ 
	Heyendaalseweg 135,
	6525 AJ Nijmegen, The Netherlands}
\email{vanja.nikolic@ru.nl} 
\begin{document}
\vspace*{8mm}
\begin{abstract}
Motivated by numerical modeling of ultrasound waves, we investigate conforming finite element approximations of quasilinear and possibly nonlocal equations of Westervelt type. These wave equations involve either a strong dissipation or damping of fractional-derivative type and we unify them into one class by introducing a memory kernel that satisfies non-restrictive regularity and positivity assumptions. As the involved damping parameter is relatively small and can become negligible in certain (inviscid) media, it is important to develop methods that remain stable as the said parameter vanishes. To this end, the contributions of this work are twofold. First, we determine sufficient conditions under which conforming finite element discretizations of (non)local Westervelt equations can be made robust with respect to the dissipation parameter. Secondly, we establish the rate of convergence of the semi-discrete solutions in the singular vanishing dissipation limit. The analysis hinges upon devising appropriate energy functionals for the semi-discrete solutions that remain uniformly bounded with respect to the damping parameter. Numerical experiments are provided to illustrate the theory.   
\end{abstract}
\vspace*{-7mm}
\maketitle           

\section{Introduction}
At high frequencies or amplitudes, propagation of sound waves is described by quasilinear wave equations. In complex tissue-like media, additionally, nonlocal attenuation effects become apparent. In practice, the involved dissipation parameter is relatively small and becomes negligible in inviscid media. It is thus important to establish conditions under which numerical simulation methods remain stable and accurate across different attenuation scales. Simulation of such nonlinear and nonlocal models is especially relevant for advancing medical applications of ultrasound waves in imaging~\cite{szabo2004diagnostic} and treatment of solid tumors~\cite{kennedy2005high}. With this motivation in mind, we investigate finite element discretizations  of the following class of nonlinear wave equations:
\begin{equation} \label{model_eq}
	\begin{aligned}
	(\mathfrak{a}(u^\eps) u^\varepsilon_t)_t-c^2\Delta u^\varepsilon-\varepsilon  \frakK*\Delta \ut^\varepsilon=f,
	\end{aligned}
\end{equation}
that are robust with respect to the parameter $\eps\, \revisedd{R2}{\ll}\, 1$ , where
\[
\aaa(\ueps)=1+k\ueps, \quad k \in \R,
\]
and $*$ denotes the Laplace convolution in time. Originally derived by Westervelt in~\cite{westervelt1963parametric} for inviscid media with $\eps=0$, in thermoviscous media it involves $\frakK=\delta_0$ (the Dirac delta distribution) and thus the strong damping $-\eps \Delta \uteps$. The damping parameter is then known as the sound diffusivity~\cite{lighthill1956viscosity} as it contributes to the parabolic-like character of the model. The vanishing limit is thus singular as there is a change from parabolic-like to hyperbolic evolution.  In complex media, $\frakK$ is weakly singular. It may be given by the Abel fractional kernel:
\begin{equation} \label{Abel_krenel}
	\frakK= \frac{1}{\Gamma(1-\alpha)}t^{-\alpha} \quad \alpha \in (0,1),
\end{equation}
where $\Gamma(\cdot)$ is the Gamma function. In that case the resulting dissipation term is $-\eps \Delta \textup{D}_t^\alpha u$, where $\textup{D}_t^\alpha$ is the Caputo--Djrbashian derivative of order $\alpha$. Mittag-Leffler-type kernels are also of interest; we refer to~\cite{kaltenbacher2022limiting} and Section~\ref{Sec:TheoreticalPreliminaries} for details. We unify these different cases of interest into one model in \eqref{model_eq} by imposing general non-restrictive regularity and sign conditions on $\frakK$; see Section~\ref{Sec:TheoreticalPreliminaries}.  \\
\indent \indent The goal of this work is twofold. First, we wish to determine sufficient conditions for a uniform conforming finite element discretization of \eqref{model_eq} with respect to $\varepsilon$. In other words, we wish to arrive at stability and \emph{a priori} error estimates for the corresponding semi-discrete solution where the involved constant does not blow up as $\varepsilon \rightarrow 0$.  Secondly, we aim to establish whether the vanishing dissipation asymptotics is preserved on the discrete level. These properties are visualized in the diagram in Figure~\ref{Fig:Diagram}. The present work fits into the wider class of studies on asymptotic-preserving numerical methods; see, e.g.,~\cite{jin2022asymptotic, degond2017asymptotic, feireisl2018asymptotic, filbet2013analysis}, and the references provided therein. Remarkably, such methods permit discretizing the preturbed and limit problems with the same discretization parameters. \revisedd{R1}{We also note that the limiting small-parameter behavior of mathematical models and their discretizations is of further interest for studying, for example, homogenization methods. This field is quite rich; we point the readers to, e.g., the results of~\cite{jager2023approximation, d2024homogenization} and, in the context of linear wave equations in heterogeneous media, to~\cite{abdulle2020effective, owhadi2008numerical}, and the references contained therein.}
\begin{figure}[h]
	\adjustbox{scale=1.1,center}{
		\begin{tikzcd}[row sep=huge, column sep = huge]
		\ref{Ph_nl} \arrow[r, dashed, "h \rightarrow 0"] \arrow[d, dashed, "\eps \rightarrow 0"]
		& \ref{P_nl}  \arrow[d, "\eps \rightarrow 0"] \\
		\ref{Ph_nl_zero} \arrow[r                                                                                                                                                                                                                                                                                                                                                                                                                                                                                                                                                                                                                                          , "h \rightarrow 0"]
		&  \ref{P_nl_zero}
		\end{tikzcd}
	}
	\caption{Asymptotic Preserving diagram: This work establishes the connection between \ref{Ph_nl} and \ref{P_nl} as $h \rightarrow$ 0, uniformly in $\varepsilon$, as well as between \ref{Ph_nl} and \ref{Ph_nl_zero} as $\varepsilon \rightarrow 0$.} \label{Fig:Diagram}
\end{figure} 

\subsection*{Main contributions} Our main results provide sufficient conditions under which the following convergence rates hold:
		\begin{equation} \label{eps_conv_1}
\begin{aligned}
\|u^\eps-\uh^\eps\|_{W^{2,\infty}(0,T; \Ltwo)}+h\|\nabla(u^\eps-\uh^\eps)\|_{W^{1,\infty}(0,T; \Ltwo)} \leq C h^r\revisedd{R2}{,}
\end{aligned}
\end{equation}
with $d/2<r \leq p+1$ and $p$ being the polynomial degree of the finite element space, and
		\begin{equation} \label{eps_conv_2}
	\begin{aligned}
	\begin{multlined}[t]	\|\uh^\eps-\uh^0\|_{L^\infty(0,T; \Ltwo)}+	\|\uht^\eps-\uht^0\|_{L^\infty(\Ltwo)}+	\|\nabla(\uh^\eps-\uh^0)\|_{L^\infty(\Ltwo)} \\ \leq C {\eps}, \end{multlined}
	\end{aligned}
\end{equation}
where the involved constants do not depend on $h$ or $\eps$.  Estimate \eqref{eps_conv_1}  allows keeping the discretization parameter $h$ fixed across different scales of $\eps$. Estimate \eqref{eps_conv_2} establishes the error made when exchanging the damped and undamped approximations. The nonlocal term in \eqref{model_eq} can also be understood as a numerical regularization of the undamped problem. Such artificial diffusion (in particular, with $\frakK=\delta_0$) is commonly added to reduce the effects of numerical dispersion in the simulation of quasilinear acoustic waves; see, for example,~\cite{schenke2022explicit}. Thus error bound \eqref{eps_conv_2} also allows determining the error of this numerical relaxation.  
\subsection*{Related works and the present approach} To the best of our knowledge, this is the first asymptotic study of nonlinear semi-discrete acoustic equations. In terms of related works, we point out the non-uniform finite element analysis of the local Westervelt equation (with $\frakK=\delta_0$ and $\eps>0$). In this strongly damped setting, the Dirichlet boundary value problem for the Westervelt equation is known to be globally well-posed with an exponential decay of the energy of the system; see~\cite{kaltenbacher2009global, meyer2011optimal}. \\
\indent In the undamped case ($\eps=0$), smooth solutions are expected to exist only locally in time; see~\cite{dorfler2016local} for a well-posedness analysis. Finite element analysis of the inviscid Westervelt equation  follows as a special case of the results in~\cite{hochbruck2022error, dorich1173strong}. Uniform-in-$\eps$ error estimates for a mixed finite element approximation of the local Westervelt equation in the so-called potential form (that is, with $\frakK=\delta_0$ and $\aaa=1+k \ut^\eps$) is available in \cite{Meliani2022}. Analysis of a time-discretization of \eqref{model_eq} with fractional-type damping has been performed in~\cite{baker2022numerical}. \\
\indent The core of the analysis arguments of our approach lies in devising semi-discrete energy functionals for which the approximate solution $\uh^\eps$ is uniformly bounded in $\eps$. The uniform energy analysis is first carried out on a linearized problem with $\aaa=\aaa(x,t) \geq \ulal >0$ and then transferred to a nonlinear one through a fixed-point argument in the spirit of, e.g.,~\cite{makridakis1993finite, ortner2007discontinuous, nikolic2019priori}.  For this fixed-point argument to work, it is essential that a uniform bound is established  on $\uhtt^\eps$ in a suitable norm. Compared to the non-uniform finite-element analysis, the present approach leads to having to consider also the time-differentiated problem and involve higher-order energy functionals; cf.~\cite{nikolic2019priori}. Note that the energy maneuvers are considerably restricted by ({\bf 1})  ensuring the non-degeneracy of the leading factor $1+k \uh^\eps \geq \ulal>0$, and ({\bf 2}) the positivity properties one can expect from the (in general, fractional-type)  kernels. Thus there is a delicate interplay of nonlinearity, nonlocality, and potential degeneracy of the problem to consider.
\subsection*{Organization of the paper} We organize the rest of the exposition as follows. In Section~\ref{Sec:TheoreticalPreliminaries}, we discuss the assumptions on the kernel and provide details on the semi-discretization. Section~\ref{Sec:LinearAnalysis} contains the uniform stability and \emph{a priori} error analysis of a linearized problem with a variable leading coefficient. Section~\ref{Sec:NonlinearAnalysis} relates these results to the nonlinear problem using Schauder's fixed-point theorem, under suitable smallness conditions on the discretization parameter and the exact solution, independently of $\eps$. In Section~\ref{Sec:EpsLimit}, we prove the error bound \eqref{eps_conv_2}. The key theoretical results are contained in Theorems~\ref{Thm:WellpNl} and~\ref{Thm:epsConv_lower}. Finally, in Section~\ref{Sec:Numerics}, we illustrate the theory with numerical examples.
\section{Problem setting} \label{Sec:TheoreticalPreliminaries}
Let $\Omega \subset \mathbb{R}^d$ be \revisedd{R2}{an interval ($d=1$), or} a convex polygonal ($d=2$) or polyhedral ($d=3$) domain. Given final time $T>0$, we consider the following initial boundary-value problem for the (in general) nonlocal Westervelt equation: 
\begin{equation} \label{P_nl} \tag{\ensuremath{{\bf {P}}^{\varepsilon}}}
\left\{ \begin{aligned}
&(((1+k u^\eps)\ut^\eps)_t, \phi)_{L^2}+(c^2\nabla u^\eps+\varepsilon  \frakK*\nabla \ut^\eps, \nabla \phi)_{L^2}= (f, \phi)_{L^2}, \\[2mm]
&\text{for all } \phi\in H_0^1(\Omega) \text{ a.e.\ in time, with}  \\[2mm]
&(u^\eps, \ut^\eps)\vert_{t=0}=	(u_{0}, u_{1}),
\end{aligned} \right.
\end{equation}
where $(\cdot, \cdot)_{L^2}$ denotes the scalar product on $L^2(\Omega)$. The limiting problem is given by
\begin{equation} \label{P_nl_zero} \tag{\ensuremath{{\bf {P}}^{0}}}
\left\{ \begin{aligned}
&(((1+k u^0)\ut^0)_t, \phi)_{L^2}+(c^2\nabla u^0, \nabla \phi)_{L^2}= (f, \phi)_{L^2}, \\[2mm]
&\text{for all } \phi\in H_0^1(\Omega) \text{ a.e.\ in time with}  \\[2mm]
&(u^0, \ut^0)\vert_{t=0}=	(u_{0}, u_{1}).
\end{aligned} \right.
\end{equation}
We next impose relatively mild conditions on the kernel $\frakK$ that allow us to cover the relevant examples from the literature.
\subsection{Assumptions on the memory kernel} We make the following regularity and positivity assumptions on the memory kernel: \vspace*{1mm}
\begin{enumerate}[label=(\textbf{$\bf \mathcal{A}_\arabic*$})]
	\item \label{Aone}	 	 $\frakK \in \{\delta_0\} \cup L^1(0,T)$;  \\
	\item \label{Atwo}  For all $y\in L^2(0, T; \Ltwo)$, 
	\begin{equation}
		\int_0^{t} \intO \left(\frakK* y \right)(s) \,y(s)\dxs \geq 0, \quad t \in (0,T].
	\end{equation} 
\end{enumerate}
The case $\frakK=\delta_0$ satisfies both assumptions and leads to the strongly damped Westervelt equation. We expect the analysis below to also generalize to measures $\frakK *\in \mathcal{M}(0,t)$ that satisfy \ref{Atwo}, however with somewhat increased technicality. As a reminder of this possibility, we use the following norm below: 
\[
\|\frakK\|_{\mathcal{M}(0,T)}=\begin{cases}
\	1 &\text{ if }\frakK=\delta_0,\\[1mm]
\	\|\frakK\|_{L^1(0,T)}&\text{ if }\frakK\in L^1(0,T).
\end{cases}
\]
\indent Assumptions \ref{Aone} and \ref{Atwo} are also satisfied by fractional-derivative-type kernels. In particular, they hold for the Abel kernel in \eqref{Abel_krenel} and thus the below semi-discrete theory applies to the Westervelt equation with time-fractional damping:
\begin{equation} \label{West_frac}
	\begin{aligned}
		(\aaa(u^\eps) \ut^\eps)_t-c^2\Delta u^\eps-\varepsilon \Delta \textup{D}_t^{\alpha} u^\eps=f, \qquad \alpha \in (0,1);
	\end{aligned}
\end{equation}
see~\cite{kaltenbacher2022limiting}. Given $w \in W^{1,1}(0,T)$, $\textup{D}_t^{\alpha}$ denotes the Djrbashian--Caputo fractional derivative of order $\alpha$:
\[
\textup{D}_t^{\alpha}w(t)=\frac{1}{\Gamma(1-\alpha)}\int_0^t (t-s)^{-\alpha} w_t(s) \, \textup{d} s,
\]
where $\Gamma(\cdot)$ is the Gamma function; see,~\cite[Ch.\ 1]{kubica2020time} and~\cite[Ch.\ 2.4.1]{podlubny1998fractional}. \\
\indent Kernels of Mittag-Leffler type given by
\begin{equation} \label{MT_kernel}
	\frakK (t)= t^{\beta-1} E_{\alpha, \beta}(-t^{\alpha}), \quad  \alpha \in (0, 1]\revisedd{R2}{,}
 \end{equation}
with $\beta \in \{1, \alpha, 2 \alpha-1\}$ arise in nonlinear acoustics when the heat flux laws of Compte--Metzler type written in the Gurtin--Pipkin form are used within the system of governing equations of sound motion; see~\cite{kaltenbacher2022limiting} for the modeling details. Above $E_{\alpha, \beta}$ denotes the generalized Mittag-Leffler function~\cite[Ch.\ 2]{kubica2020time}:
\begin{equation}
	E_{\alpha, \beta} (t)= \sum_{k=0}^\infty \frac{t^k}{\Gamma(\alpha k+\beta)}.
\end{equation}
Such kernels also satisfy assumptions \ref{Aone} and \ref{Atwo}; see, e.g.,~\cite{kaltenbacher2022limiting, gorenflo2020mittag}. \\
\indent In general, assumption \ref{Atwo} can be verified using the Fourier analysis techniques from~\cite[Lemma 2.3]{Eggermont1987} or, in case of completely monotone non-constant kernels, by relying on~\cite[Lemma 5.2]{kaltenbacher2022limiting}.
\subsection{Assumptions on the exact solution} 
\indent In what follows, we will assume that \eqref{P_nl} has a solution, such that
\begin{equation} \label{def_solspace}
	u^\eps \in \usolspace=W^{3,1}(0,T; \Hrzero),
\end{equation}
where $r> d/2$. We furthermore assume that the solution is uniformly bounded in $\eps$ in the $\|\cdot \|_{\usolspace}$ norm. \\
\indent Results of this type have been recently obtained in the literature, under somewhat stronger coercivity assumptions on the memory kernel; see~\cite{kaltenbacher2022limiting, kaltenbacher2023limiting, kaltenbacher2022parabolic} for details.  $\eps$-uniform well-posedness of \eqref{P_nl} with
\[
\begin{aligned}
 &u^\eps \in L^\infty(0,T;  H^3(\Omega) \cap H_0^1(\Omega)) \cap W^{1, \infty}(0,T; H^2(\Omega) \cap H_0^1(\Omega)) 
\cap H^2(0,T; H_0^1(\Omega)), \\
& \Delta u^\eps \vert_{\partial \Omega} =0,
\end{aligned}
\]
has been proven in~\cite[Theorem 3.1]{kaltenbacher2022inverse} assuming (in addition to \ref{Aone}) that there exists $C>0$, independent of $\varepsilon$, such that
\begin{equation}
	\int_0^{t} \intO \left( \frakK* y \right)(s) \,y(s)\dxs \geq 	C \intt \|( \frakK* y)(s)\|^2_{\Ltwo} \ds.
\end{equation}
This assumption is fulfilled as well by the kernels discussed above (with $\alpha>1/2$ if $\beta=2 \alpha-1$ in \eqref{MT_kernel}); see~\cite[Section 5]{kaltenbacher2022limiting} for the verification. \\
\indent The regularity we assume in \eqref{def_solspace} (and, in particular, $W^{3,1}$ regularity with respect to time), can be proven for sufficiently smooth data by modifying the arguments of \cite[Theorem 3.1]{kaltenbacher2023limiting}, which covers the nonlinearities of the form $a(u^\eps)=1+ k u^\eps_t$, under an additional assumption on the kernel:
\[
	\int_0^t\bigl((\frakK * y_t)(s), y(s)\bigr)_{L^2(\Omega)}\ds\geq - C\, \|y(0)\|^2_{L^2(\Omega)}, \quad
y\in W^{1,1}(0,t;L^2(\Omega)),
\]
which is again satisfied by all kernels mentioned above; see~\cite[Section 3]{kaltenbacher2023limiting}. In agreement with the assumed regularity in time,  we also mention that with fractional-type kernels the solutions of Westervelt's equation with $f=0$ are expected to obey the time asymptotics:
\[
u_{tt}(t) \sim w_0+ z_0 t^\mu+ \revisedd{R2}{\mathcal{O}}(t) \ \text{as} \ t \rightarrow 0\revisedd{R2}{,}
\]
with $\mu \in (0,1)$; see~\cite[Section 5]{baker2022numerical} for details.
\subsection{Notation} Below we continue to use $(\cdot, \cdot)_{L^2}$ to denote the scalar product on $\Ltwo$.  When writing norms on Bochner spaces, we omit the temporal domain when it is $(0,T)$; for example, $\|\cdot\|_{L^p(L^q(\Om))}$ denotes the norm on $L^p(0,T; L^q(\Omega))$. We occasionally use $x \lesssim y$ to denote $x \leq C y$, where $C>0$ is a constant that does not depend on the discretization parameter or $\eps$. Likewise, $C_{1}$, $C_2$ below denote generic positive constants that do not depend on the discretization parameter or $\eps$. We use $x \lesssim_T y$ when the hidden constant depends on $T$ in such a way that it tends to $\infty$ as $T \rightarrow \infty$; this is often the case after employing a Sobolev embedding in time.

\subsection{Semi-discretization} We consider the discretization in space by continuous piecewise polynomial finite elements that vanish on the boundary.  For $h \in (0, \overline{h}]$, let $\mathcal{T}_h$  be a discretization of $\Omega$ made of intervals $K$ (in $\mathbb{R}$), triangles $K$ (in $\mathbb{R}^2$) or of tetrahedrons $K$ (in $\mathbb{R}^3$)  so that $\overline{\Omega}=\cup_{K \in \mathcal{T}_h} \overline{K}$. We denote by $P_{p}(K)$ the space of polynomials on $K$ of degree $p \geq 1$. We then introduce the finite element space as
\begin{align} \label{FEM_space}
V_h=\{u_h \in H_0^1(\Omega): \ u_{h} \vert_{K}\, \in P_p(K), \, \forall K \in \mathcal{T}_h \}.
\end{align}
We assume that $\{\mathcal{T}_h\}_{0<h \leq \overline{h}}$ is a shape-regular and quasiuniform family, which will allow us to exploit the following inverse inequalities for finite element functions:
\begin{subequations}
\begin{align} 
		\| \phi_h\|_{\Linf} \leq&\, \Cinv  h^{-d/2}\|\phi_h\|_{\Ltwo}, \ \phi_h \in V_h, \label{inverse_ineq}\\
			\|\nabla \phi_h\|_{\Ltwo} \leq&\, \tCinv  h^{-1}\|\phi_h\|_{\Ltwo}, \ \phi_h \in V_h; \label{inverse_ineq2}
\end{align}
\end{subequations}
see~\cite[Theorem 4.5.11]{brenner2008mathematical}. \\
\indent Let $\phi \in H^{r}(\Omega)$ with $ d/2< r \leq p+1$. In the upcoming proofs, we will also use the following well-known stability and approximation properties of the standard interpolation operator $\Ih$:
\begin{equation}
	\begin{aligned}
			\|\phi -I_h \phi\|_{\Ltwo} \leq&\, \Capproxint h^{r} \|\phi\|_{\Hr}, \\ 
	\|I_h \phi\|_{L^q(\Omega)} \leq&\, \Cstint \|\phi\|_{L^q(\Omega)}, \quad 1 \leq q \leq \infty;
	\end{aligned}
\end{equation}
see~\cite[Ch.\  4]{brenner2008mathematical}.  In the analysis below, the approximate initial data will be chosen as the Ritz projection of the exact ones and we will employ the Ritz projection of $u^\eps(\cdot, t)$ to decompose the approximation error; these choices are crucial. The Ritz projection of $\phi \in \Hr \cap H_0^1(\Om)$ is defined by
\begin{equation}
	(\nabla (\phi- \Rh \phi), \nabla \phi_h)_{L^2} = 0, \qquad  \forall \phi_h \in V_h,
\end{equation}
and we have
\begin{equation}
	\|\phi-\Rh \phi\|_{\Ltwo}+	h\|\phi-\Rh \phi\|_{\Hone} \leq \Capproxritz h^{r} \|\phi\|_{\Hr};
\end{equation}
see~\cite[Lemma 1.1]{thomee2007galerkin}. 
\section{Uniform finite element analysis of nonlocal linear wave equations with a variable leading coefficient} \label{Sec:LinearAnalysis}
In this section, we analyze a linearized semi-discrete wave equation with a variable principal coefficient. This uniform analysis will serve as a basis for the later fixed-point argument. For ease of notation, in this and the forthcoming section we drop the superscript $\eps$; that is,  \[u=u^\eps \quad \text{and} \quad \uh=\uh^\eps \] in Sections~\ref{Sec:LinearAnalysis} and~\ref{Sec:NonlinearAnalysis}.  We consider the following linearization of the inviscid semi-discrete problem: 
\begin{equation} \label{BK_lin}
\begin{aligned}
(((1+k \alphah)\uht)_t-c^2\Delta_h \uh-\eps  \frakK*\Delta_h\uht , \phi_h)_{L^2}= ( f, \phi_h)_{L^2}\revisedd{R2}{,}
\end{aligned}
\end{equation}
where $\Delta_h:V_h \rightarrow V_h^*$ is the discrete Laplacian operator:
\[
-(\Delta_h v_h, \phi_h)_{L^2}: =(\nabla v_h, \nabla \phi_h)_{L^2}, \qquad  \phi_h \in V_h.
\]
In other words, with the short-hand notation 
\begin{equation}
	\begin{aligned}
		\aaalin=&\,\aaalin(\alphah)=\,1+k \alphah(x,t),
	\end{aligned}
\end{equation}
the linear semi-discrete problem is given by
\begin{equation} \label{Ph} \tag{\ensuremath{{\bf {P}}_{h}^{\varepsilon,\textup{lin}}}}
\left\{ \begin{aligned}
&((\aaa \uht)_t, \phi_h)_{L^2}+(c^2\nabla \psih+\varepsilon  \frakK*\nabla \uht, \nabla \phi_h)_{L^2}= (f, \phi_h)_{L^2}, \\[2mm]
& \text{ for all }\, \phi_h \in V_h \ \text{a.e.\ in time, with} \\[2mm]
&(\uh, \uht)\vert_{t=0}=	(u_{0h}, u_{1h}) =(\Rh u_0, \Rh u_1).
\end{aligned} \right.
\end{equation} 
The proper choice of the approximate initial data is essential for the upcoming stability and error analysis as it will allow us to uniformly bound the initial energy; we refer to Proposition~\ref{Prop:InitialStability} for details. If $\frakK \in L^1(0,T)$, the assumptions on the second approximate initial condition can be relaxed to choosing any $u_{1h} \in V_h$ that satisfies
	\[
	\|u_1-u_{1h}\|_{\Ltwo} +h\|\nabla(u_1-u_{1h})\|_{\Ltwo} \lesssim h^r,
	\]
however, for ease of presentation, we set $u_{1h}=\Rh u_1$.
\subsection*{Strategy in the semi-discrete analysis} In what follows we intend to use the following testing strategy:
\begin{equation} \label{testing_strategy}
\eqref{Ph} \cdot \uht + \eqref{Ph} _t \cdot \uhtt. 
\end{equation}
That is, we will test the linearized problem \eqref{Ph}  with $\phi_h=\uht(t)$ and the time-differentiated linearized problem with $\phi_h=\uhtt(t)$ and integrate over time. The first step will give a bound on a lower-order energy.  The second step will additionally give a bound on a higher-order (in time) energy, which is crucial for the subsequent fixed-point argument, where we will need to have control of the second time derivative of the approximate acoustic pressure. \\
\indent In the process, we will rely on the differentiation formula
\begin{equation} \label{derivative_convolution}
	(\frakK* \uht)_t= \frakK* \uhtt + \eta \frakK(t) \uht(0),
\end{equation}
where we have introduced 
\begin{equation} \label{def_eta}
	\begin{aligned}
	\eta = \begin{cases}
		0 \ &\text{if} \ \, \frakK =\delta_0, \\
		1 \ &\text{otherwise}.
	\end{cases}
\end{aligned}	
\end{equation}
Thus, the time-differentiated equation will have $f_t- \eps \eta \frakK \uht(0) $ as the right-hand side; see~\eqref{BK_lin_diff}.   \\
\indent The two analysis steps within \eqref{testing_strategy} will invoke the following conditions:
\begin{equation}
	\begin{aligned}
	&	\inttO \frakK* \nabla \uht \cdot \nabla \uht \dxs \geq 0, \\
		&	\inttO \frakK* \nabla \uhtt \cdot \nabla \uhtt \dxs \geq 0, 
		\end{aligned}
\end{equation}
which hold thanks to assumption \ref{Atwo}. 

\subsection{Uniform stability of the linearized problem} To facilitate the analysis, we introduce a lower-order energy functional
\begin{equation}
	\begin{aligned}
		E_0[\psih](t)= \|\psiht(t)\|^2_{\Ltwo}+\|\nabla \psih(t)\|^2_{\Ltwo}
	\end{aligned}
\end{equation}
and a higher-order energy functional in time
\begin{equation}
	\begin{aligned}
		E_1[\psih](t)= 	E_0[\psiht](t)=\|\psihtt(t)\|^2_{\Ltwo}+\|\nabla \psiht(t)\|^2_{\Ltwo}
	\end{aligned}
\end{equation}
for $t \in [0,T]$.  Observe that $E_1\revisedd{R2}{[u_h]}(0)$ contains $\|\uhtt(0)\|_{\Ltwo}$, which should stay uniformly bounded in $h$ and $\eps$. We will come back to resolve this issue in  Proposition~\ref{Prop:InitialStability}.  \\
\indent We also point out that we have the equivalence of the norms induced by the energies with the following norms:
\begin{equation}
\begin{aligned}
&\esssup_{t \in (0,T)}	E_0[\psih](t) \sim \|\psih\|^2_{W^{1,\infty}(\Ltwo)}+\|\psih\|^2_{L^{\infty}(\Hone)},\\ 
&\esssup_{t \in (0,T)}	\left(E_0[\psih](t)+E_1[\psih](t)\right) \sim \|\psih\|^2_{W^{2,\infty}(\Ltwo)}+\|\psih\|^2_{W^{1,\infty}(\Hone)}. 
\end{aligned}
\end{equation}
\indent We proceed to show that the linearized problem is stable with respect to the $E_0+E_1$ energy.  Since we are interested in the limiting case $\eps \rightarrow 0$, we can restrict our analysis without loss of generality to $\eps \in [0, \bar{\eps}]$ for some given fixed $\bar{\eps}>0$. Below we keep track of the nonlinearity constant $k$, as setting $k=0$ reduces the problem to a linear one in which case the imposed conditions on $\aaa$ trivially hold. 
\begin{proposition}\label{Prop:LinStability} Let $\eps \in [0, \bar{\eps}]$ and assumptions \ref{Aone} and \ref{Atwo} on the memory kernel hold. Let $d/2< r \leq p+1$. Assume that the coefficient $\alpha_h \in W^{2, 1}(0,T; V_h)$
 is non-degenerate: there exist constants $\ulal$, $\olal>0$, independent of $h$ and $\eps$, such that
	\begin{equation} \label{non-degeneracy}
	0 < \ulal \leq \aaalin(\alphah)= 1 + k \alphah\leq \olal, \quad  \ \text{for all} \  (x,t) \in \overline{\Om} \times [0,T].
	\end{equation}
	Additionally, let $f \in W^{1,1}(0,T; \Ltwo)$.	Then there exists $m>0$, independent of $h$ and $\varepsilon$, such that if
	\begin{equation} \label{smallness_alphat}
		\vert k \vert \|\alphaht\|_{L^1(\Linf)}+\vert k \vert \|\alphahtt\|_{L^1(\Linf)} \leq m,
			\end{equation}  
then the solution of \eqref{Ph} satisfies the following stability bound:
	\begin{equation} \label{stability_est_lin}
	\begin{aligned}
\sup_{t \in (0,T)}	\left( E_0+ E_1 \right)[\uh](t) \lesssim (E_0+ E_1)[\uh](0)+ \|f\|^2_{W^{1,1}(\Ltwo)},
	\end{aligned}
	\end{equation}
where the hidden constant
does not depend on $h$, $\eps$, or the final time $T$.
\end{proposition}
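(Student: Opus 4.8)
The plan is to implement the testing strategy \eqref{testing_strategy} directly, combining the two resulting energy identities and absorbing all damping and lower-order terms into the left-hand side. I would first test \eqref{Ph} with $\phi_h = \uht(t)$ and integrate over $(0,t)$. The leading term $((\aaa\uht)_t,\uht)_{L^2}$ expands as $(\aaa\uhtt,\uht)_{L^2} + (\aaat\uht,\uht)_{L^2}$; writing $(\aaa\uhtt,\uht)_{L^2} = \tfrac12\ddt(\aaa\uht,\uht)_{L^2} - \tfrac12(\aaat\uht,\uht)_{L^2}$, I get a clean time derivative of $\int_\Om \aaa|\uht|^2\dx$ plus a remainder controlled by $\vert k\vert\|\alphaht\|_{\Linf}\|\uht\|_{\Ltwo}^2$. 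The elastic term gives $\tfrac{c^2}{2}\ddt\|\nabla\uh\|_{\Ltwo}^2$, and the damping term $\eps\int_0^t(\frakK*\nabla\uht,\nabla\uht)_{L^2}\ds \geq 0$ by \ref{Atwo} and so is simply dropped. Using the non-degeneracy \eqref{non-degeneracy}, $\int_\Om\aaa|\uht|^2\dx \sim E_0$-part, so after Cauchy--Schwarz and Young on the $f$-term and the $\aaat$-remainder, this step yields a bound on $\sup_{(0,t)}E_0[\uh]$ in terms of $E_0[\uh](0)$, $\|f\|_{L^1(\Ltwo)}^2$, a Grönwall-type term $\int_0^t \vert k\vert\|\alphaht(s)\|_{\Linf}E_0[\uh](s)\ds$, and (crucially) \emph{no} $\eps$-dependence.

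The second step is to differentiate \eqref{Ph} in time. Using \eqref{derivative_convolution}, the differentiated equation \eqref{BK_lin_diff} has principal part $((\aaa\uhtt)_t,\phi_h)_{L^2} + (\aaat\uht,\phi_h)_{L^2}$ wait — more precisely $(\aaat\uhtt + (\aaa\uhtt)_t,\phi_h)$; I test it with $\phi_h=\uhtt(t)$ and integrate. As before, $((\aaa\uhtt)_t,\uhtt)_{L^2}$ produces $\tfrac12\ddt\int_\Om\aaa|\uhtt|^2\dx$ minus a remainder $\sim\vert k\vert\|\alphaht\|_{\Linf}\|\uhtt\|_{\Ltwo}^2$; the extra term $(\aaat\uht,\uhtt)_{L^2} + $ (terms from differentiating $\aaa\uhtt$ carefully: $(\aaat\uhtt,\uhtt)$ and $(\aaatt\uht,\uhtt)$) are all controlled by $\vert k\vert(\|\alphaht\|_{\Linf}+\|\alphahtt\|_{\Linf})(E_0+E_1)[\uh]$ after Cauchy--Schwarz. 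The elastic term gives $\tfrac{c^2}{2}\ddt\|\nabla\uht\|_{\Ltwo}^2$. The damping term becomes $\eps\int_0^t(\frakK*\nabla\uhtt,\nabla\uhtt)_{L^2}\ds + \eps\eta\int_0^t(\frakK(s)\nabla\uht(0),\nabla\uhtt(s))_{L^2}\ds$: the first is $\geq 0$ by \ref{Atwo} and dropped, while the second — which does carry an $\eps$ — must be estimated uniformly; I would move it to the right-hand side and bound it by $\eps\eta\|\frakK\|_{\mathcal{M}(0,T)}\|\nabla\uht(0)\|_{\Ltwo}\sup_{(0,t)}\|\nabla\uhtt\|_{\Ltwo}$... but $\nabla\uhtt$ is not in $E_1$. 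Instead I would integrate by parts in time to rewrite $\int_0^t(\frakK(s)\nabla\uht(0),\nabla\uhtt(s))\ds = (\text{boundary terms in }\nabla\uht) - \int_0^t(\frakK_t * \cdots)$, or more simply bound it via $\eps\eta\|\frakK\|_{\mathcal{M}(0,T)}\|\nabla\uht(0)\|_{\Ltwo}(\|\nabla\uht(t)\|_{\Ltwo}+\|\nabla\uht(0)\|_{\Ltwo})$ using that $\frakK*\nabla\uhtt$ is $\partial_t(\frakK*\nabla\uht)$ minus the delta-term — using \eqref{derivative_convolution} in reverse and noting $\eps\eta\|\nabla\uht(0)\|_{\Ltwo}^2 \lesssim \eps\bar\eps^{-1}\cdot(\text{initial energy})$ is absorbed into $(E_0+E_1)[\uh](0)$ since $\eps\leq\bar\eps$. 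The RHS term $\eps\eta\frakK\uht(0)$ from \eqref{BK_lin_diff} paired with $\uhtt$ is handled identically, contributing $\eps\eta\|\frakK\|_{\mathcal{M}(0,T)}\|\uht(0)\|_{\Ltwo}$ times $\sup\|\uhtt\|_{\Ltwo}$, Young'd into $E_1$ plus initial data; and the $f_t$ term contributes $\|f_t\|_{L^1(\Ltwo)}^2$.

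Adding the two steps, the left-hand side dominates $\tfrac12\int_\Om\aaa(|\uht|^2+|\uhtt|^2)\dx + \tfrac{c^2}{2}(\|\nabla\uh\|^2+\|\nabla\uht\|^2) \gtrsim (E_0+E_1)[\uh](t)$ by \eqref{non-degeneracy}, the right-hand side is $(E_0+E_1)[\uh](0) + \|f\|_{W^{1,1}(\Ltwo)}^2 + \int_0^t g(s)(E_0+E_1)[\uh](s)\ds$ with $g(s) = C\vert k\vert(\|\alphaht(s)\|_{\Linf}+\|\alphahtt(s)\|_{\Linf})$ and $\|g\|_{L^1(0,T)}\leq Cm$; a Grönwall argument then gives \eqref{stability_est_lin} with hidden constant $\lesssim e^{Cm}$, hence $T$-independent provided \eqref{smallness_alphat} holds — here the smallness of $m$ is only needed if one wants to avoid the exponential, but to keep the constant genuinely uniform one takes $m$ small enough that $e^{Cm}\leq 2$, say. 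I expect the main obstacle to be precisely the bookkeeping of the $\eta$-term $\eps\eta\frakK(t)\uht(0)$ arising from \eqref{derivative_convolution}: it is the only place where $\eps$ survives explicitly on the right-hand side, and one must check it is genuinely $\mathcal{O}(\eps)\cdot(\text{uniformly bounded data})$ and does not require control of $\|\nabla\uhtt\|_{\Ltwo}$ (which is not in the energy); the resolution is to pair it carefully and use $\eps\leq\bar\eps$ to fold $\eps\eta\|\nabla\uht(0)\|_{\Ltwo}^2$ into the initial energy, together with Proposition~\ref{Prop:InitialStability} guaranteeing $E_1[\uh](0)$ — in particular $\|\uhtt(0)\|_{\Ltwo}$ — is itself uniformly bounded.
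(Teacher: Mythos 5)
Your proposal follows the same route as the paper's proof: test \eqref{Ph} with $\uht$, test the time-differentiated problem \eqref{BK_lin_diff} with $\uhtt$, drop both convolution contributions using the positivity assumption \ref{Atwo}, and control the $\aaalint$- and $\aaalintt$-remainders through the smallness condition \eqref{smallness_alphat}. One cosmetic difference: you close with a Gr\"onwall argument and argue that the constant $e^{Cm}$ is $T$-independent because the Gr\"onwall weight has $L^1(0,T)$-norm at most $Cm$; the paper instead bounds these remainders by $\tfrac12\|\aaalint\|_{L^1(\Linf)}\|\uht\|^2_{L^\infty(0,t;\Ltwo)}$ (and the higher-order analogues) and absorbs them directly into the left-hand side after taking the supremum in time, using $\tfrac12\|\aaalint\|_{L^1(\Linf)}+\gamma_0<\tfrac12\ulal$. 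Both devices deliver the claimed $T$-uniform constant.

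The one substantive point is the $\eta$-term, which you correctly single out as the main obstacle. Differentiating $-\eps\,\frakK*\Delta_h\uht$ and testing with $\uhtt$ does produce, as you write, $\eps\eta\int_0^t\frakK(s)\,(\nabla\uht(0),\nabla\uhtt(s))_{L^2}\ds$, and you are right that $\nabla\uhtt$ is not controlled by $E_0+E_1$; neither of your sketched fixes closes this (integrating by parts in $s$ would require differentiating the weakly singular kernel, and folding $\|\nabla\uht(0)\|^2_{\Ltwo}$ into the initial energy does not remove the $\nabla\uhtt$ factor). The paper's own proof never meets this difficulty because it places the extra source on the right-hand side of \eqref{BK_lin_diff} simply as $-\eps\eta\frakK(t)\uht(0)$ and estimates $\intt(\eps\eta\frakK(s)\uht(0),\uhtt)_{L^2}\ds\leq\eps\eta\|\frakK\|_{\calM(0,T)}\|\uht(0)\|_{\Ltwo}\|\uhtt\|_{L^\infty(0,t;\Ltwo)}$, i.e.\ as a plain $L^2$ pairing with no gradient of $\uhtt$. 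To reconcile your (more faithful) accounting with that estimate, use the discrete Laplacian: $(\nabla\uht(0),\nabla\uhtt(s))_{L^2}=-(\Delta_h u_{1h},\uhtt(s))_{L^2}$, and since $u_{1h}=\Rh u_1$ one has $-(\Delta_h \Rh u_1,\phi_h)_{L^2}=(\nabla \Rh u_1,\nabla\phi_h)_{L^2}=(\nabla u_1,\nabla\phi_h)_{L^2}=-(\Delta u_1,\phi_h)_{L^2}$ for all $\phi_h\in V_h$ whenever $u_1$ is smooth enough, so that $\|\Delta_h u_{1h}\|_{\Ltwo}\leq\|\Delta u_1\|_{\Ltwo}$ uniformly in $h$ and $\eps$; the term is then handled by Young's inequality exactly like the $f_t$-term. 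With that repair your argument is complete and coincides with the paper's.
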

\begin{proof}
As the linearized problem \eqref{Ph} is non-degenerate thanks to the assumptions on $\alpha_h$, existence of a unique solution \[\uh \in W^{3,1}(0,T; V_h) \hookrightarrow C^{2}([0,T]; V_h)\] follows using the existence theory for Volterra integral equations of second kind or standard ODE theory in case $\frakK=\delta_0$; we provide the details in Appendix~\ref{Appendix:Semi-discrete}. We focus our attention here on obtaining a uniform (in both $\eps$ and $h$) energy bound. \\
\indent	We follow the strategy outlined in \eqref{testing_strategy} and conduct the proof in two steps, corresponding to testing with $\uht$ and testing the time-differentiated problem with $\uhtt$. Note that we rely on the positivity of the kernel assumed in \ref{Atwo} to omit the $\varepsilon$ terms in the estimates below. Testing \eqref{Ph} with $\phi_h=\psiht(s)$ and integrating over $(0,t)$ for $t \in (0,T)$ yields
	\begin{equation}\label{interim_est}
	\begin{aligned}
&	\frac12  \|\sqrt{\aaalin(\alphah)} \psiht(s)\|^2_{\Ltwo} \Big \vert_0^t + 	\frac{c^2}{2}  \|\nabla \psih(s)\|^2_{\Ltwo} \Big \vert_0^t   \\
	\leq& \, \begin{multlined}[t] -\frac12  \inttO \aaalint \psiht^2\dxs+\inttO f \uht \dxs,	\end{multlined} 
	\end{aligned}
	\end{equation}
where $\aaa_t= k \alphaht$. Using Young's inequality, from here we have for any $\gamma_0>0$:
\begin{equation} \label{est_low_1}
	\begin{aligned}
&	\frac12  \|\sqrt{\aaalin(\alphah)} \psiht(s)\|^2_{\Ltwo} \Big \vert_0^t + 	\frac{c^2}{2}  \|\nabla \psih(s)\|^2_{\Ltwo} \Big \vert_0^t   \\
\leq&\,\begin{multlined}[t]\frac12   \|\aaalint\|_{L^1(\Linf)}\|\uht\|^2_{L^\infty(0,t; \Ltwo)}+\frac{1}{4 \gamma_0}\|f\|^2_{L^1(\Ltwo)}\\\hspace*{6cm}+\gamma_0 \|\uht\|^2_{L^\infty(0,t; \Ltwo)}.
\end{multlined}
	\end{aligned}
\end{equation}
If $\gamma_0>0$ and $m>0$ are small enough so that
\[
\frac12   \|\aaalint\|_{L^1(\Linf)}+ \gamma_0 < \frac12 \ulal,
\]
from \eqref{est_low_1} we obtain a lower-order energy bound: 
		\begin{equation} \label{first_est}
		\displaystyle \sup_{t \in (0,T)}	  E_0[\uh](t) \lesssim E_0[\psih](0)+ \|f\|^2_{L^{1}(\Ltwo)}. 
	\end{equation}
We next use the time-differentiated semi-discrete problem:
	\begin{equation} \label{BK_lin_diff}
		\begin{aligned}
		\left(	(\aaalin\uht)_{tt}-c^2\Delta_h \uht-\eps  \frakK*\Delta_h \uhtt, \phi_h\right)_{L^2}= \left(f_t -\eps\eta \frakK(t) \uht(0), \phi_h \right)_{L^2} 
		\end{aligned}
	\end{equation}
	 and test it with $\phi_h=\psihtt(s)$, which after integration in time and exploiting the coercivity property in \ref{Atwo} gives
	\begin{equation}
	\begin{aligned}
	&\frac12  \|\sqrt{\aaalin(\alphah)} \psihtt(s)\|^2_{\Ltwo} \Big \vert_0^t + 	\frac{c^2}{2}  \|\nabla \psiht(s)\|^2_{\Ltwo} \Big \vert_0^t \\
\leq& \, \begin{multlined}[t] -\frac32 \intt (\aaalint \uhtt, \uhtt)_{L^2}\ds - \intt (\aaalintt \uht, \uhtt)_{L^2}\ds\\\hspace*{5cm}+\intt (f_t-\eta \frakK(t) \uht(0) , \uhtt)_{L^2}\ds,
	\end{multlined}
	\end{aligned}
	\end{equation}
where $\aaa_{tt}= k \alphahtt$. We can estimate the last term above using Young's inequality as follows:
\begin{equation}
	\begin{aligned}	
	&\intt (f_t-\eps\eta\frakK(t) \uht(0) , \uhtt)_{L^2}\ds\\
	 \leq&\,( \|f_t\|_{L^1(\Ltwo)}+\eps\eta\|\frakK\|_{\calM(0,T)} \|\uht(0)\|_{\Ltwo)})\|\uhtt\|_{L^\infty(0,t; \Ltwo)}\\
	\leq&\, \begin{multlined}[t]\frac{1}{2 \gamma_1}\|f_t\|^2_{L^1(\Ltwo)} +\frac{1}{2 \gamma_1}\bar{\eps}^2\eta^2 \|\frakK\|^2_{\calM(0,T)}\|\uht(0)\|^2_{\Ltwo}+\gamma_1 \|\uhtt\|^2_{L^\infty(0,t; \Ltwo)}
		\end{multlined}
	\end{aligned}
\end{equation}
for any $\gamma_1>0$. We thus have 
	\begin{equation}\label{ineq_lin_stab}
	\begin{aligned}
		&\frac12  \|\sqrt{\aaalin(\alphah)} \psihtt(s)\|^2_{\Ltwo} \Big \vert_0^t + 	\frac{c^2}{2}  \|\nabla \psiht(s)\|^2_{\Ltwo} \Big \vert_0^t \\
		\leq& \,\begin{multlined}[t]  \frac32 \|\aaalint\|_{L^1(\Linf)} \|\uhtt\|^2_{L^\infty(0,t; \Ltwo)}\\+\frac12\|\aaalintt\|_{L^1(\Linf)}( \|\uht\|_{L^\infty(0,t; \Ltwo)}^2+\|\uhtt\|_{L^\infty(0,t; \Ltwo)}^2)\\+  \frac{1}{2 \gamma_1}\|f_t\|^2_{L^1(\Ltwo)} +\frac{1}{2 \gamma_1}\bar{\eps}^2\eta^2 \|\frakK\|^2_{\calM(0,T)}\|\uht(0)\|^2_{\Ltwo} +\gamma_1 \|\uhtt\|^2_{L^\infty(0,t; \Ltwo)}.
			\end{multlined}
	\end{aligned}
\end{equation}
 For sufficiently small $\gamma_1$ and $m$, adding this inequality to \eqref{interim_est} and proceeding analogously to the first step leads to estimate \eqref{stability_est_lin}, as claimed.	
\end{proof}
For the uniform stability to hold, it remains to prove that the initial energy $(E_0+E_1)\revisedd{R2}{[u_h]}(0)$ is bounded uniformly in $h$ and $\eps$, which boils down to proving uniform boundedness of $\uhtt(0)$. This term should be understood as the solution to
\begin{equation} \label{compat_cond}
	\begin{aligned}
		\begin{multlined}[t]
			((1+k \alphah(0))\uhtt(0), \phi_h)_{L^2}+c^2(\nabla \psih(0), \nabla \phi_h)_{L^2}\\ \hspace*{0.9cm}+\eps( (\frakK*\nabla \psiht)(0), \nabla \phi_h)+(k \alphaht(0)\uht(0), \phi_h)_{L^2} = (f(0), \phi_h)_{L^2}
		\end{multlined}
	\end{aligned}
\end{equation}
for all $\phi_h \in V_h$.  If $\frakK \in L^1(0,T)$, we have $(\frakK*\nabla \psiht)(0)=0$ above because $\psiht$ is $L^\infty$ regular in time (in fact, $\uht \in C^1([0,T]; V_h)$). Recall that we assume $f \in W^{1,1}(0,T; \Ltwo)$, which implies $f \in C([0,T]; \Ltwo)$.
\begin{proposition} \label{Prop:InitialStability} Let the assumption of Proposition~\ref{Prop:LinStability} hold.
	Assume that the exact initial data and source term at time zero are smooth enough in the following sense:
	\begin{equation} \label{smoothness_data}
		\begin{aligned}
			&(u_0, u_1) \in  
					\Hrzero  \times \, \Hrzero, \\[1mm]
			&\utt(0)=\,	(1+ku_0)^{-1}(c^2\Delta u_0+\eps (\frakK*\Delta \ut)(0) +f(0)) \in H^r(\Omega)
		\end{aligned}
	\end{equation}
	with $d/2<  r  \leq p+1$.  Let
	\[
	(\alphah, \alphaht)\vert_{t=0}=(u_{0h}, u_{1h}) = (\Rh u_0, \Rh u_1)
	\]
	and let $\uhtt(0)$ be determined by \eqref{compat_cond}. If the following smallness condition holds:
	\begin{equation} \label{smallness_initial}
|k|\left( 	\Cstint\|u_0\|_{\Linf}+	\Cinv \bar{h}^{r-d/2} (\Capproxritzz +\Capproxint)\|u_{0}\|_{\Hr}  \right) \leq  \frac{1}{2 },
	\end{equation}
then
	\begin{equation}
		\begin{aligned}
			\|\utt(0)-\uhtt(0)\|_{\Ltwo} \leq C(u_0, u_1, k, f(0)) h^r,
		\end{aligned}
	\end{equation}
	where the constant $C>0$ does not depend on $h$ or on $\eps$.
\end{proposition}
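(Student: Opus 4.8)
The goal is to control $\|\utt(0)-\uhtt(0)\|_{\Ltwo}$, where $\uhtt(0)$ solves the discrete identity \eqref{compat_cond} and $\utt(0)$ is the (continuous) expression in \eqref{smoothness_data}. The natural strategy is to test the \emph{continuous} relation defining $\utt(0)$ against $\phi_h \in V_h$, subtract it from \eqref{compat_cond}, and estimate the resulting error equation, exploiting the Ritz-projection choice of initial data to make the gradient terms disappear. I would split the error as
\[
\utt(0)-\uhtt(0) = (\utt(0)-I_h \utt(0)) + (I_h\utt(0)-\uhtt(0)) =: \rho + \theta_h,
\]
so that $\theta_h\in V_h$ and $\|\rho\|_{\Ltwo}\le \Capproxint h^r\|\utt(0)\|_{\Hr}$ is immediate from the interpolation estimate, using that $\utt(0)\in H^r(\Omega)$ by \eqref{smoothness_data}.

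\emph{Deriving the error equation for $\theta_h$.} Writing $a_0 := 1+ku_0$ and $a_{0h}:=1+k\uoh = 1+k\Rh u_0$, I would rewrite the continuous relation as $(a_0\utt(0),\phi_h)_{L^2} = (c^2\Delta u_0 + \eps(\frakK*\Delta\ut)(0)+f(0),\phi_h)_{L^2}$, integrate by parts the Laplacian terms (using $\Delta u_0\vert_{\partial\Omega}$ is not needed since we test against $H^1_0$ functions: $(\Delta u_0,\phi_h)_{L^2} = -(\nabla u_0,\nabla\phi_h)_{L^2}$), and subtract \eqref{compat_cond}. The $c^2$ gradient terms combine to $-c^2(\nabla(u_0-\uh(0)),\nabla\phi_h)_{L^2}$, which vanishes because $\uh(0)=\Rh u_0$ and $\phi_h\in V_h$ — this is precisely why the Ritz projection is the correct choice. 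Similarly, if $\frakK\in L^1(0,T)$ the convolution terms both vanish at $t=0$; if $\frakK=\delta_0$ then $\eta=0$ and $(\frakK*\Delta\ut)(0)=\Delta u_1$ on the continuous side versus $\Delta_h\uht(0)=\Delta_h\Rh u_1$ on the discrete side, and again the Ritz projection kills the difference after integration by parts. The $f(0)$ terms cancel exactly. What remains is
\[
(a_{0h}\theta_h,\phi_h)_{L^2} = (a_{0h}I_h\utt(0) - a_0\utt(0),\phi_h)_{L^2} - (k(\alphaht(0)\uht(0) - u_1\cdot u_1\,?\,),\dots)
\]
— more precisely the zeroth-order reaction terms $k\alphaht(0)\uht(0) = k\Rh u_1\,\Rh u_1$ versus the continuous $k u_1 u_1$ (coming from $\utt(0)$'s defining expression having no such term — wait, it does not; that term is genuinely discrete). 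Let me restate: the remainder is a sum of terms of the form $(k(u_0 - \uoh)\utt(0),\phi_h)$, $(k\uoh(I_h\utt(0)-\utt(0)),\phi_h)$, and a term involving $k(\Rh u_1)^2 - (\text{its continuous counterpart})$ if present in \eqref{compat_cond}; each is $O(h^r)$ times data norms via the Ritz and interpolation bounds plus $H^r\hookrightarrow L^\infty$ (valid since $r>d/2$).

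\emph{Closing the estimate.} Choosing $\phi_h=\theta_h$ gives $\int_\Omega a_{0h}\theta_h^2\dx$ on the left. The coefficient $a_{0h}=1+k\Rh u_0$ need not satisfy the non-degeneracy \eqref{non-degeneracy} a priori, but the smallness condition \eqref{smallness_initial} is exactly engineered so that $|k\Rh u_0|\le |k|(\Cstint\|u_0\|_{\Linf} + \Cinv\bar h^{r-d/2}(\Capproxritzz+\Capproxint)\|u_0\|_{\Hr})\le 1/2$: here one writes $\Rh u_0 = I_h u_0 + (\Rh u_0 - I_h u_0)$, bounds $\|I_h u_0\|_{\Linf}\le \Cstint\|u_0\|_{\Linf}$, and bounds $\|\Rh u_0 - I_h u_0\|_{\Linf}\le \Cinv\bar h^{-d/2}\|\Rh u_0 - I_h u_0\|_{\Ltwo}\le \Cinv\bar h^{-d/2}(\Capproxritzz+\Capproxint)h^r\|u_0\|_{\Hr}$ by the inverse inequality \eqref{inverse_ineq} and the triangle inequality on the two approximation bounds. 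Hence $a_{0h}\ge 1/2$ pointwise, so $\tfrac12\|\theta_h\|_{\Ltwo}^2\le \int a_{0h}\theta_h^2 = (\text{remainder},\theta_h)\le Ch^r\|\theta_h\|_{\Ltwo}$, giving $\|\theta_h\|_{\Ltwo}\le Ch^r$ with $C=C(u_0,u_1,k,f(0))$. Combining with the bound on $\rho$ via the triangle inequality yields the claim.

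\emph{Main obstacle.} The routine parts are the interpolation/Ritz estimates and Young's inequality; the only genuinely delicate point is the \emph{bookkeeping} in the error equation — making sure every term involving $\Delta u_0$, $\Delta u_1$ or the convolution at $t=0$ is correctly handled in \emph{both} the $\frakK=\delta_0$ and the $\frakK\in L^1$ cases, and verifying that after integration by parts the Ritz-orthogonality indeed annihilates all the gradient-paired terms (this forces the choice $\uoh=\Rh u_0$, $\uoneh=\Rh u_1$ and is the structural reason the proposition is stated with that choice). The second subtlety is that the non-degeneracy of $a_{0h}$ is \emph{not} assumed but must be \emph{derived} from \eqref{smallness_initial} via the inverse estimate — one must be careful that the constant there matches exactly the combination $\Cstint\|u_0\|_{\Linf}+\Cinv\bar h^{r-d/2}(\Capproxritzz+\Capproxint)\|u_0\|_{\Hr}$, which dictates how to split $\Rh u_0$.
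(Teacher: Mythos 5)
Your proposal follows essentially the same route as the paper: derive the error equation at $t=0$ (with the gradient terms annihilated by the Ritz choice of $u_{0h},u_{1h}$), establish non-degeneracy of $1+ku_{0h}$ from \eqref{smallness_initial} via the interpolant-plus-inverse-inequality splitting, test with the discrete part of the error, and bound the remainder in $L^2\times L^\infty$ pairings. The only cosmetic difference is that you split off $I_h\utt(0)$ where the paper uses $\Rh\utt(0)$; both give the same $\mathcal{O}(h^r)$ bound since only the $L^2$ approximation property is needed for that piece.
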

\begin{proof}
We begin the proof by noting that error $e_{tt, 0}=\utt(0)-\uhtt(0)$ at time zero satisfies 
	\begin{equation} \label{initial_error}
		\begin{aligned}
			&((1+k u_{0h})e_{tt, 0}, \phi_h)_{L^2}\\
			=&\,\begin{multlined}[t]-k( (u_0-u_{0h})u_{tt}(0), \phi_h)_{L^2} -k((u_1-u_{1h})(u_1+u_{1h}), \phi_h)_{L^2}.
			\end{multlined}
		\end{aligned}
	\end{equation}
	Here we have used the fact that $c^2(\nabla (u_0-u_{0h}), \nabla \phi_h)_{L^2}=0$ due to our choice of the approximate initial data. Similarly, $\eps(\nabla (u_1-u_{1h}), \nabla \phi_h)_{L^2}=0$ when $\frakK=\delta_0$, otherwise this term does not appear. \\
	\indent Equation \eqref{initial_error} is non-degenerate under the assumptions of the statement, on account of the following bound on $u_{0h}$:
	\begin{equation} \label{bound_u0h}
		\begin{aligned}
			&\|u_{0h}\|_{\Linf}\\
			 \leq&\, \|u_{0h}-I_h u_0\|_{\Linf} +\|I_h u_0\|_{\Linf}\\
			\leq&\, \Cinv h^{-d/2} \|u_{0h}-I_h u_0\|_{\Ltwo} +\|I_h u_0\|_{\Linf}  \\
			\leq&\, \Cinv h^{-d/2} (\|u_{0h}- u_0\|_{\Ltwo}+\|u_{0}-I_h u_0\|_{\Ltwo}) +\Cstint\|u_0\|_{\Linf}\\
			\leq&\, \Cinv \bar{h}^{r-d/2} (\Capproxritzz \|u_{0}\|_{\Hr}+\Capproxint\|u_{0}\|_{\Hr}) +\Cstint\|u_0\|_{\Linf}
		\end{aligned}
	\end{equation}	
and the smallness assumption in \eqref{smallness_initial}.  Indeed, by \eqref{bound_u0h} and \eqref{smallness_initial} we have
\[
\frac12 \leq  1+ k u_{0h} \leq \frac32.
\]
We note that $u_0 \in \Hr$  implies $u_0 \in \Linf$ when $r > d/2$. To arrive at the statement, we split the error at time zero into two contributions: \[e_{tt, 0}=(u_{tt}(0)-\Rh u_{tt}(0))-(\uhtt(0) -\Rh u_{tt}(0)):= e_{\Rh \utt(0)}-e_{htt, 0}.\] Then $e_{htt, 0}$ solves
	\begin{equation} \label{initial_error_split}
		\begin{aligned}
			&((1+k u_{0h})e_{htt, 0}, \phi_h)_{L^2}\\
			=&\, 	\begin{multlined}[t] k( (u_0-u_{0h}))u_{tt}(0), \phi_h)_{L^2}+k((u_1-u_{1h})(u_1+u_{1h}), \phi_h)_{L^2}
				\\+	((1+k u_{0h})e_{\Rh \utt(0)}, \phi_h)_{L^2} 	\end{multlined}
		\end{aligned}
	\end{equation}
	for all $\phi \in V_h$. We next choose $\phi_h=e_{htt, 0} \in V_h$. Using Young's inequality for the right-hand side terms immediately yields
	\begin{equation} \label{est_initial_energy}
		\begin{aligned}
		&	\|e_{htt, 0}\|_{\Ltwo} \\
			\lesssim&\, 	\begin{multlined}[t] \|k (u_0-u_{0h})u_{tt}(0)\|_{\Ltwo}+ \|k(u_1-u_{1h})(u_1+u_{1h})\|_{\Ltwo}
			\\	+ \|(1+k u_{0h})e_{\Rh \utt(0)}\|_{\Ltwo}.
			\end{multlined}
		\end{aligned}
	\end{equation}
	It remains to estimate the right-hand side terms. Having in mind that we intend to arrive at $\mathcal{O}(h^r)$ convergence of the approximate solution of the nonlinear problem in the $L^\infty(0,T; L^2(\Omega))$ norm, we should estimate these terms so that $u_0-u_{0h}$ appears in the $\Ltwo$ norm (and likewise $u_1-u_{1h}$ and $e_{\Rh \utt(0)}$).  By H\"older's inequality, 
	\begin{equation} \label{rhs_estimates_zero_time}
		\begin{aligned}
			 \|k (u_{0}-u_{0h})u_{tt}(0)\|_{\Ltwo}
			\leq&\, \vert k \vert \|u_0-u_{0h}\|_{\Ltwo} \|\utt(0)\|_{\Linf}, \\
			 \|k(u_1-u_{1h})(u_1+u_{1h})\|_{\Ltwo} \leq&\, \vert k \vert  \|u_1-u_{1h}\|_{\Ltwo} (\|u_1\|_{\Linf}+\|u_{1h}\|_{\Linf}), 
		\end{aligned}
	\end{equation}
and
	\begin{equation} \label{rhs_estimates_zero_time_}
	\begin{aligned}
		\|(1+k u_{0h})e_{\Rh \utt(0)}\|_{\Ltwo} \leq&\, (1+\vert k \vert\|u_{0h}\|_{\Linf})\| e_{\Rh \utt(0)}\|_{\Ltwo}.
	\end{aligned}
\end{equation}
Analogously to \eqref{bound_u0h}, we have the uniform bound
\begin{equation}
	\begin{aligned}
		\|u_{1h}\|_{L^\infty(\Omega)} \leq \Cinv \bar{h}^{r-d/2} (\Capproxritzz+\Capproxint) \|u_{1}\|_{H^r(\Omega)} +\Cstint\|u_1\|_{L^\infty(\Omega)}.
	\end{aligned}
\end{equation}
	Employing these bounds in \eqref{est_initial_energy} together with the approximation properties of the Ritz projection in the $L^2(\Omega)$ norm leads to the desired estimate.
\end{proof}
\noindent Thanks to the established result,  we can further conclude that
\begin{equation}
	\begin{aligned}
		\| \uhtt(0)\|_{\Ltwo} \leq&\, \| \utt(0)-\uhtt(0)\|_{\Ltwo} +\| \utt(0)\|_{\Ltwo} \leq C(u_0, u_1, k, f(0)) ,
	\end{aligned}
\end{equation}
and thus bound the approximate higher-order initial energy independently of $h$ and $\eps$. 

\subsection{Accuracy of the linearized problem} In the final step of the linear analysis, we determine the accuracy of the finite element approximation.  Let $u$  solve the nonlinear problem \eqref{P_nl} and $\uh$ the linearized problem \eqref{Ph}. The error $e=u-\uh$ satisfies
\begin{equation}
\begin{aligned}
((\aaa(\alphah) e_{t})_t-c^2 \Delta_h e -\eps \frakK*\Delta_h  e_t, \phi_h)_{L^2}
=\, (- (k(u-\alphah)\ut)_t, \phi_h)_{L^2}.
\end{aligned}
\end{equation}
As before, we decompose the error into two parts using the Ritz projection
\begin{equation}
\begin{aligned}
e = (u-\Rh u)-(\uh-\Rh u) := e_{\Rh u}-\eh.
\end{aligned}
\end{equation}
We estimate the second contribution to the error by seeing $e_h$ as the solution to
\begin{equation} \label{eq_error}
	\begin{aligned}
	((\aaa(\alphah) \eht)_t-c^2 \Delta_h \eh -\eps \frakK*\Delta_h  \eht, \phi_h)_{L^2}
	=\, \begin{multlined}[t] (\textup{rhs}, \phi_h)_{L^2}\revisedd{R2}{,}
	\end{multlined}
	\end{aligned} 
	\end{equation}
	with the right-hand side
		\begin{equation} 
\begin{aligned}
&(\textup{rhs}, \phi_h)_{L^2}\\
	 :=&\,( (1+k \alphah) (e_{\Rh u})_{tt}+k \alphaht (e_{\Rh u})_t+k(u-\alphah)\utt+ k(\ut-\alphaht)\ut, \phi_h)_{L^2},
	\end{aligned} 
\end{equation}
and supplemented by the homogeneous boundary and initial conditions (due to our choice of the approximate initial data).  To arrive at \eqref{eq_error}, we have relied on
\begin{equation}
(\frakK *\nabla (u-\Rh u)_t, \nabla \phi_h)_{L^2}=0, \quad \phi_h \in V_h.
\end{equation}
We next employ the derived stability bound and the approximation properties of the Ritz projection to obtain the following result.
\begin{proposition} \label{Prop:Error_est_lin} Let the assumptions of Propositions~\ref{Prop:LinStability} and \ref{Prop:InitialStability} hold. Given $d/2 < r \leq p+1$, we assume that $u \in \usolspace$, defined in \eqref{def_solspace}. 
Then the following error bound holds:
	\begin{equation} \label{Error_est_lin}
	\begin{aligned}
		&\|u-\uh\|_{W^{2, \infty}(\Ltwo)} 
		+ h \| \nabla (u -\uh)\|_{W^{1, \infty}(\Ltwo)}\\[1mm]
		\leq& \, \begin{multlined}[t] \Clin(T) \Big \{(1+\vert k \vert\|\alphaht\|_{L^1(L^\infty(\Omega))}+\vert k \vert\|\alphahtt\|_{L^1(L^\infty(\Omega))})h^{r}\|u\|_{W^{3,1}(\Hr)} \\  \hspace*{4cm}+\vert k \vert\|u-\alphah\|_{W^{2,\infty}(\Ltwo)} \|u\|_{W^{3,1}(L^\infty(\Omega))}  \Big \}, \end{multlined}
	\end{aligned}
	\end{equation}
	where the constant $\Clin$ does not depend on $h$ or on $\eps$. 
\end{proposition}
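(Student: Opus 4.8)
The plan is to estimate the two pieces of the decomposition $u - \uh = e_{\Rh u} - \eh$, with $e_{\Rh u} = u - \Rh u$ and $\eh = \uh - \Rh u$, separately. The piece $e_{\Rh u}$ is a pure interpolation error: since $\Rh$ commutes with $\partial_t$, applying the $\Ltwo$- and $\Hone$-approximation bounds for the Ritz projection to $\partial_t^j u$, $j = 0,1,2$, and then the Sobolev embedding $W^{3,1}(0,T) \hookrightarrow W^{2,\infty}(0,T)$ in time gives
\[
\|e_{\Rh u}\|_{W^{2,\infty}(\Ltwo)} + h\|\nabla e_{\Rh u}\|_{W^{1,\infty}(\Ltwo)} \lesssim_T h^r\|u\|_{W^{3,1}(\Hr)},
\]
which is already contained in the first term on the right of \eqref{Error_est_lin}; the $T$-dependence here is the origin of $\Clin(T)$.

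For $\eh$, I would read \eqref{eq_error} as an instance of the linear problem analyzed in Proposition~\ref{Prop:LinStability}: it has the same non-degenerate coefficient $\aaalin(\alphah) = 1 + k\alphah$, the same kernel, right-hand side $\textup{rhs}$, and homogeneous first-order initial data $\eh(0) = \eht(0) = 0$ (because $(\uoh, \uoneh) = (\Rh u_0, \Rh u_1)$ and $\Rh$ commutes with $\partial_t$). Hence $E_0[\eh](0) = 0$, and the only surviving part of the initial energy is $E_1[\eh](0) = \|\ehtt(0)\|^2_{\Ltwo}$; since $\ehtt(0) = \uhtt(0) - \Rh\utt(0)$, Proposition~\ref{Prop:InitialStability} together with the $\Ltwo$-bound for $\Rh$ gives $\|\ehtt(0)\|_{\Ltwo} \lesssim_T h^r$, with a constant depending (through $\usolspace \hookrightarrow C^2([0,T];\Hr)$) only on $\|u\|_{\usolspace}$ and $k$, which is absorbed into $\Clin(T)$. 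Vanishing of $\eht(0)$ also annihilates the term $\eps\eta\,\frakK(t)\eht(0)$ that would otherwise appear on the right of the time-differentiated equation (cf.\ \eqref{BK_lin_diff}), so the stability estimate of Proposition~\ref{Prop:LinStability} applies and yields, uniformly in $h$ and $\eps$,
\[
\sup_{t\in(0,T)}(E_0+E_1)[\eh](t)\ \lesssim\ \|\ehtt(0)\|^2_{\Ltwo} + \|\textup{rhs}\|^2_{W^{1,1}(\Ltwo)}.
\]
Here $\textup{rhs}$ is itself $\eps$-independent, because the $\eps$-convolution contribution drops out of the error equation through the Ritz identity $(\frakK*\nabla(u-\Rh u)_t, \nabla\phi_h)_{L^2} = 0$; this is what keeps the final bound $\eps$-free. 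Taking square roots and using the norm equivalence $\sup_{t\in(0,T)}(E_0+E_1)[\eh](t)\sim\|\eh\|^2_{W^{2,\infty}(\Ltwo)} + \|\eh\|^2_{W^{1,\infty}(\Hone)}$, which already dominates $h^2\|\nabla\eh\|^2_{W^{1,\infty}(\Ltwo)}$, reduces the task to bounding $\|\textup{rhs}\|_{W^{1,1}(0,T;\Ltwo)}$.

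This last bound is where the real work lies, and it amounts to distributing H\"older's inequality in time so that precisely the norms in \eqref{Error_est_lin} come out. Writing $\textup{rhs} = (1+k\alphah)(e_{\Rh u})_{tt} + k\alphaht(e_{\Rh u})_t + k(u-\alphah)\utt + k(\ut-\alphaht)\ut$ and its time derivative, I would group the summands into two types. For the Ritz terms $(1+k\alphah)(e_{\Rh u})_{tt}$, $k\alphaht(e_{\Rh u})_t$ and their derivatives, one puts $(e_{\Rh u})_t, (e_{\Rh u})_{tt}$ in $L^\infty(\Ltwo)$ and $(e_{\Rh u})_{ttt}$ in $L^1(\Ltwo)$ --- all $\lesssim_T h^r\|u\|_{W^{3,1}(\Hr)}$ by commutation of $\Rh$ with $\partial_t$, the Ritz estimate, and the embedding in time --- while keeping the coefficients $\alphaht, \alphahtt$ in $L^1(\Linf)$; this produces the factor $\bigl(1 + |k|\|\alphaht\|_{L^1(\Linf)} + |k|\|\alphahtt\|_{L^1(\Linf)}\bigr) h^r\|u\|_{W^{3,1}(\Hr)}$. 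For the consistency terms $k(u-\alphah)\utt$, $k(\ut-\alphaht)\ut$ and their derivatives (which involve $u-\alphah$, $\ut-\alphaht$ and $\utt-\alphahtt$), one instead keeps the differences and their time derivatives in $L^\infty(\Ltwo)$ --- hence inside $\|u-\alphah\|_{W^{2,\infty}(\Ltwo)}$ --- and puts $\ut, \utt, \uttt$ in $L^1(\Linf)$, producing $|k|\,\|u-\alphah\|_{W^{2,\infty}(\Ltwo)}\,\|u\|_{W^{3,1}(\Linf)}$. (That $\textup{rhs}\in W^{1,1}(0,T;\Ltwo)$ at all uses $u\in\usolspace$ and $\alphah\in W^{2,1}(0,T;V_h)$.) Combining these two contributions with the $e_{\Rh u}$ estimate of the first paragraph yields \eqref{Error_est_lin}. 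I expect the only genuine obstacles to be (a) the H\"older bookkeeping for the consistency terms --- ensuring that $u-\alphah$ appears in the weak norm $W^{2,\infty}(\Ltwo)$, which is essential for the later fixed-point argument, and not in a stronger norm; and (b) the correct accounting, through Proposition~\ref{Prop:InitialStability}, of the higher-order initial term $\|\ehtt(0)\|_{\Ltwo}$, the single component of $(E_0+E_1)[\eh](0)$ that does not vanish.
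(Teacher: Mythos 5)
Your proposal follows essentially the same route as the paper: the same Ritz-based splitting $u-\uh=e_{\Rh u}-\eh$, the same reading of \eqref{eq_error} as an instance of the linearized problem so that the stability bound of Proposition~\ref{Prop:LinStability} applies with $(E_0+E_1)[\eh](0)$ controlled via Proposition~\ref{Prop:InitialStability}, and the same H\"older bookkeeping for $\|\textup{rhs}\|_{W^{1,1}(\Ltwo)}$ (Ritz terms paired with $\alphaht,\alphahtt$ in $L^1(\Linf)$, consistency terms keeping $u-\alphah$ in $W^{2,\infty}(\Ltwo)$ against $u$ in $W^{3,1}(\Linf)$). The argument is correct and matches the paper's proof in all essential steps.
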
 
	\begin{proof}		
	Using H\"older's inequality and the approximation properties of the Ritz projection, it is not difficult to check that 
		\begin{equation}
	\begin{aligned}
		\|\textup{rhs}\|_{L^1(\Ltwo)} 
		\lesssim&\,_T \begin{multlined}[t] (1+\vert k \vert\|\alphaht\|_{L^1(L^\infty(\Omega))}) h^r \|u\|_{W^{2,1}(\Hr)}\\ \hspace*{3cm}+\vert k \vert\|u-\alphah\|_{W^{1,\infty}(\Ltwo)})\|u\|_{W^{2,1}(\Linf)}. \end{multlined}
	\end{aligned}
\end{equation}	
We next estimate $(\textup{rhs})_t$ by bounding the time derivatives of each of the four summands within $\textup{rhs}$. First we have
\begin{equation}
\begin{aligned}
& \|(1+k \alphaht) (e_{\Rh u})_{tt}\|_{L^1(\Ltwo)} +	\|(1+k \alphah) (e_{\Rh u})_{ttt}\|_{L^1(\Ltwo)} \\
\lesssim_T&\,  \begin{multlined}[t](1+\vert k \vert\|\alphaht\|_{L^1(L^\infty(\Omega))})\|(e_{\Rh u})_{tt}\|_{L^\infty(\Ltwo)} \\\hspace*{4cm}+ (1+\vert k \vert\|\alphah\|_{L^\infty(L^\infty(\Omega))})\|(e_{\Rh u})_{ttt}\|_{L^1(\Ltwo)} 
	\end{multlined}\\
\lesssim_T&\,  (1+\vert k \vert\|\alphaht\|_{L^1(L^\infty(\Omega))}) h^r \|\utt\|_{L^\infty(\Hr)}+ (1+\olal) h^r \|\uttt\|_{L^1(\Hr)}.
 \end{aligned}
\end{equation}
Note that $u \in \usolspace$ implies $\utt \in L^\infty(0,T; H^r(\Omega))$. Similarly,
\begin{equation}
	\begin{aligned}
&\| k \alphahtt (e_{\Rh u})_t\|_{L^1(\Ltwo)}+\| k \alphaht (e_{\Rh u})_{tt}\|_{L^1(\Ltwo)} \\
\lesssim&\, |k| \|\alphahtt\|_{L^1(\Linf)} h^r\|\ut\|_{L^\infty(\Hr)} + |k|\|\alphaht\|_{L^1(\Linf)}h^r \|\utt\|_{L^\infty(\Hr)}.
	\end{aligned}
\end{equation}
Furthermore,
\begin{equation}
\begin{aligned}
\| k\left( (u-\alphah)\utt \right)_t\|_{L^1(\Ltwo)} 
=&\, \|k(\ut-\alphaht)\utt+k(u-\alphah)\uttt\|_{L^1(L^2\Omega))} \\
\lesssim_T&\,|k|\|u-\alphah\|_{W^{1,\infty}(L^2(\Omega))} \|u\|_{W^{3,1}(L^\infty(\Omega))}
\end{aligned}
\end{equation}
and
\begin{equation}
\begin{aligned}
\|k((\ut-\alphaht)\ut )_t\|_{L^1(\Ltwo)} \lesssim_T |k| \|\ut-\alphaht\|_{W^{1,\infty}(\Ltwo)}\|\ut\|_{W^{1,1}(\Linf)}.
\end{aligned}
\end{equation}
Altogether, we infer
		\begin{equation}
		\begin{aligned}
		&\|\textup{rhs}\|_{W^{1,1}(\Ltwo)} \\
		\lesssim_T&\, \begin{multlined}[t]  \Big \{(1+\vert k \vert\|\alphaht\|_{L^1(L^\infty(\Omega))}+\vert k \vert\|\alphahtt\|_{L^1(L^\infty(\Omega))})h^{r}\|u\|_{W^{3,1}(\Hr)} \\  \hspace*{4cm}+\vert k \vert\|u-\alphah\|_{W^{2,\infty}(\Ltwo)} \|u\|_{W^{3,1}(L^\infty(\Omega))}  \Big \}.
			\end{multlined}
		\end{aligned}
		\end{equation}
		Employing stability bound \eqref{stability_est_lin} to the solution of \eqref{eq_error} leads to
		\begin{equation}
		\begin{aligned}
			\sup_{t \in (0,T)}	\left( E_0+E_1  \right) [\eh](t)\lesssim (E_0+E_1)[\eh](0)+ \|\textup{rhs}\|^2_{W^{1,1}(\Ltwo)}.
		\end{aligned}
	\end{equation}
 Due to our choice of initial data, we know that $\eh(0)=\eht(0)=0$, and so
\begin{equation}
	\begin{aligned}
	E_0[\eh](0) = E_0[\uh- \Rh u](0) =0, 
	\end{aligned}
\end{equation}
and by Proposition~\ref{Prop:InitialStability}, we have
\begin{equation}
	\begin{aligned}
		E_1[\eh](0) =&\, \| (\uh-\Rh u)_{tt}(0)\|^2_{\Ltwo}\\
		 \leq&\, 2 \|(\uhtt-\utt)(0)\|^2_{\Ltwo}+ 2\|(u-\Rh u)_{tt}(0) \|^2_{\Ltwo} \leq  C(u_0, u_1, f(0)) h^{2r}.
	\end{aligned}
\end{equation}
Combined with the estimate of $\|\textup{rhs}\|_{W^{1,1}(\Ltwo)}$ derived above, this leads to the bound 
\[
\begin{aligned}
&\|\eh\|_{W^{2, \infty}(\Ltwo)} 
+  \| \nabla \eh\|_{W^{1, \infty}(\Ltwo)}  \\
\leq&\, \begin{multlined}[t]  C \Big \{(1+\vert k \vert\|\alphaht\|_{L^1(L^\infty(\Omega))}+\vert k \vert\|\alphahtt\|_{L^1(L^\infty(\Omega))})h^{r}\|u\|_{W^{3,1}(\Hr)} \\  \hspace*{4cm}+\vert k \vert\|u-\alphah\|_{W^{2,\infty}(\Ltwo)} \|u\|_{W^{3,1}(L^\infty(\Omega))}  \Big \}.\end{multlined}
\end{aligned}
\]
 In the final step of the proof, we use the approximation properties of the Ritz projection to obtain  
\begin{equation}
	\begin{aligned}	
		&\|u-\Rh u\|_{W^{2, \infty}(\Ltwo)} 
	+ h \| \nabla (u -\Rh u)\|_{W^{1, \infty}(\Ltwo)} \lesssim h^r \|u\|_{W^{2,\infty}(\Hr)},
	\end{aligned}	
\end{equation}
 and arrive at the statement.
	\end{proof}
\section{Uniform solvability and accuracy of the nonlinear semi-discrete problem} \label{Sec:NonlinearAnalysis}
In this section, we relate the previous theory to the nonlinear problem \eqref{Ph_nl} via a fixed-point argument. More precisely,  we will rely on Schauder's generalized fixed-point theorem for locally convex spaces (also known as Tikhonov's theorem; see, for example,~\cite[Corollary 9.6]{zeidler1993nonlinear}) to show existence of a fixed-point and then prove uniqueness in an additional step. To this end, we introduce the mapping
\begin{equation}
	\begin{aligned}
		\mathcal{F}: B_h \ni \alpha_h \mapsto u_h,  
	\end{aligned}
\end{equation}
where $\uh$ solves \eqref{Ph} and $\alpha_h$ is taken from a ball in the Banach space \[ X =W^{2, \infty}(0,T; L^2(\Omega)) \cap W^{1, \infty}(0,T; H_0^1(\Omega)),\]
defined by
\begin{equation} \label{ball_Bh}
\begin{aligned}
B_h =\Big\{ \alpha_h \in  W^{2, \infty}(0,T; V_h):\ & 		\|u-\alphah\|_{W^{2, \infty}(\Ltwo)}
+ h \| \nabla (u -\alphah)\|_{W^{1, \infty}(\Ltwo)} \\[0.5mm]  &\leq  C_*h^r\|u\|_{W^{3,1}(\Hr)}, \\[0.5mm]
&(\alphah, \alphaht)_{\vert t=0}=(\Rh u_0, \Rh u_1) \Big\}. 
\end{aligned}
\end{equation}
A fixed point $\alphah=\uh$ will then be a solution of the nonlinear problem and automatically satisfy the error bound. This approach in the combined well-posedness and \emph{a priori} error analysis of the nonlinear problem is inspired by~\cite{makridakis1993finite, ortner2007discontinuous}. The set $B_h$ is non-empty as the Ritz projection $\Rh u$  of the exact solution belongs to it, provided $C_*$ is chosen so that 
\begin{equation}\label{constant_Cstar}
	C_*\geq (C_{W^{3,1}\hookrightarrow W^{2,\infty}}+C_{W^{3,1} \hookrightarrow W^{1,\infty}})\Capproxritz,
\end{equation}
where the two constants in the bracket above are the embedding constants for $W^{3,1}(0,T)\hookrightarrow W^{2,\infty}(0,T)$ and $W^{3,1}(0,T)\hookrightarrow W^{1,\infty}(0,T)$. \\
\indent We will rely on the weak-$*$ topology in the proof of continuity of the mapping below. The reason lies in the fact that the difference  $\bar{u}_h=\uh^{(1)}-\uh^{(2)}$ of two solutions to the semi-discrete problem solves
\begin{equation} \label{diff_eq}
(((1+k \uh^{(1)}) \bar{u}_{ht})_t-c^2\Delta_h \bar{u}_h-\eps  \frakK*\Delta_h \bar{u}_{ht}, \phi_h)_{L^2}= (- (k\bar{u}_h \uht^{(2)})_t, \phi_h)_{L^2}
\end{equation} 
for all $\phi_h \in  V_h$; however, the right-hand side \[- (k\bar{u}_h \uht^{(2)})_t = - k\bar{u}_{ht} \uht^{(2)}- k\bar{u}_h \uhtt^{(2)}\] is insufficiently regular in time to employ the higher-order energy arguments from the previous section. We can only employ the lower-order energy $E_0$ when tackling \eqref{diff_eq}. We note that the space $X$ equipped with the weak-$*$ topology is locally convex. Furthermore,  the ball $B_h$ centered at $u$ is convex and also compact in the weak-$*$ topology in $X$ by the Banach--Alaoglu theorem. 
\begin{theorem}[Asymptotic-preserving \emph{a priori} error estimates]\label{Thm:WellpNl}
Let $T>0$, $\varepsilon \in [0, \bar{\eps}]$, and $h \in (0, \bar{h}]$. Assume that the memory kernel satisfies assumptions \ref{Aone} and \ref{Atwo}. Let the assumptions $(u_0, u_1)$ and the source term $f$ made in Propositions~\ref{Prop:LinStability} and \ref{Prop:InitialStability} hold.  Let $d/2 < r \leq p+1$ and let $u$ be the solution of \eqref{P_nl} in $\usolspace$ that is uniformly bounded with respect to $\eps$ in the $\|\cdot\|_{\usolspace}$ norm. Then there exists $\tilde{m}>0$, such that if
\begin{equation} \label{smallness}
\begin{aligned}
\vert k \vert \left\{\bar{h}^{r-d/2}\|u\|_{W^{3,1}(\Hr)}+\|u\|_{W^{3,1}(\Linf)} \right\} \leq \tilde{m},
\end{aligned}
\end{equation}
then there is a unique semi-discrete solution $\uh$ in the ball $B_h$, defined in \eqref{ball_Bh}, 
\revisedd{R2}{of} the nonlinear semi-discrete problem:
\begin{equation} \label{Ph_nl} \tag{\ensuremath{{\bf {P}}_{h}^{\varepsilon}}}
\left\{ \begin{aligned}
&(((1+k \uh)\uht)_t, \phi_h)_{L^2}+(c^2\nabla \psih+\varepsilon \nabla \frakK*\uht, \nabla \phi_h)_{L^2}= (f, \phi_h)_{L^2}, \\[2mm]
&\text{for all } \phi_h \in V_h \text{ a.e.\ in time}, \\[2mm]
&(\uh, \uht)\vert_{t=0}=	(u_{0h}, u_{1h})\vert_{t=0} =(\Rh u_0, \Rh u_1), 
\end{aligned} \right.
\end{equation}
where the constant $C_*$ \revisedd{R2}{in the definition of} $B_h$ does not depend on $h$ or on $\eps$.
\end{theorem}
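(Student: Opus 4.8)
The plan is to apply Schauder's (Tikhonov's) fixed-point theorem to the map $\mathcal F: B_h \to X$ sending $\alpha_h \mapsto u_h$, where $u_h$ solves the linearized problem \eqref{Ph}. First I would verify that $\mathcal F$ is well-defined, i.e. that $\mathcal F(\alpha_h) \in B_h$ whenever $\alpha_h \in B_h$; this is the self-mapping property. The key point is that for $\alpha_h \in B_h$ we have $\|u - \alpha_h\|_{W^{2,\infty}(\Ltwo)} + h\|\nabla(u-\alpha_h)\|_{W^{1,\infty}(\Ltwo)} \le C_* h^r \|u\|_{W^{3,1}(\Hr)}$, and from this bound together with the inverse inequality \eqref{inverse_ineq} and the smallness hypothesis \eqref{smallness} one deduces the non-degeneracy condition \eqref{non-degeneracy} for $\aaalin(\alpha_h) = 1 + k\alpha_h$ with constants $\ulal, \olal$ independent of $h$ and $\eps$: indeed $\|k\alpha_h\|_{\Linf}$ is controlled by $|k|(\|u\|_{\Linf} + \Cinv \bar h^{r-d/2}(\ldots)\|u\|_{\Hr})$, which can be made $\le 1/2$ by \eqref{smallness}. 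One must also check the smallness condition \eqref{smallness_alphat} on $\|\alpha_{ht}\|_{L^1(\Linf)} + \|\alpha_{htt}\|_{L^1(\Linf)}$: again by the inverse inequality and the $B_h$-bound these are controlled by $\|u\|_{W^{2,\infty}(\Linf)}$-type quantities plus $\bar h^{r-d/2}\|u\|_{W^{3,1}(\Hr)}$ terms, hence $\le m$ under \eqref{smallness} (shrinking $\tilde m$ if necessary). Once Propositions~\ref{Prop:LinStability}, \ref{Prop:InitialStability}, and \ref{Prop:Error_est_lin} apply, the error bound \eqref{Error_est_lin} for $u - u_h$ reads, using $\|u-\alpha_h\|_{W^{2,\infty}(\Ltwo)} \le C_* h^r \|u\|_{W^{3,1}(\Hr)}$,
\[
\|u-u_h\|_{W^{2,\infty}(\Ltwo)} + h\|\nabla(u-u_h)\|_{W^{1,\infty}(\Ltwo)} \le \Clin(T)\bigl(1 + o(1) + |k| C_* \|u\|_{W^{3,1}(\Linf)}\bigr) h^r \|u\|_{W^{3,1}(\Hr)},
\]
so choosing $C_*$ large (at least the value in \eqref{constant_Cstar}) and $|k|\|u\|_{W^{3,1}(\Linf)}$ small via \eqref{smallness} closes the estimate: $\mathcal F(\alpha_h) \in B_h$. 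The initial-condition constraint $(\alpha_h,\alpha_{ht})|_{t=0} = (\Rh u_0, \Rh u_1)$ is preserved by construction of \eqref{Ph}.

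Next I would establish continuity of $\mathcal F$ on $B_h$ with respect to the weak-$*$ topology of $X$, which is where the argument is most delicate. Since $B_h$ with the weak-$*$ topology is metrizable (closed balls in a separable dual, or in a finite-dimensional-in-space Bochner setting), it suffices to check sequential continuity: if $\alpha_h^{(j)} \rightharpoonup^* \alpha_h$ in $X$ with all iterates in $B_h$, then $u_h^{(j)} = \mathcal F(\alpha_h^{(j)})$ has a weak-$*$ convergent subsequence whose limit is $\mathcal F(\alpha_h)$. The uniform bound \eqref{stability_est_lin} gives $\{u_h^{(j)}\}$ bounded in $W^{2,\infty}(0,T;\Ltwo)\cap W^{1,\infty}(0,T;\Hone)$, hence precompact in the weak-$*$ topology; one extracts a limit $\tilde u_h$ and must pass to the limit in the linearized weak formulation \eqref{Ph}. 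The only nonlinear-in-$\alpha_h$ term is $((k\alpha_h^{(j)} u_{ht}^{(j)})_t, \phi_h)$; here I would use that in the finite-dimensional-in-space setting, weak-$*$ convergence of $\alpha_h^{(j)}$ plus strong convergence (up to subsequence, via Aubin–Lions/Arzelà–Ascoli in time on the finite-dimensional space $V_h$) of $u_h^{(j)}$ and $u_{ht}^{(j)}$ in $C([0,T];V_h)$ lets the products converge. This identifies $\tilde u_h$ as the unique solution of \eqref{Ph} with coefficient $\alpha_h$, i.e. $\tilde u_h = \mathcal F(\alpha_h)$; since the limit is the same along every subsequence, the whole sequence converges, giving continuity. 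Applying Tikhonov's theorem on the convex, weak-$*$ compact set $B_h$ yields a fixed point $\alpha_h = u_h$, which by construction solves \eqref{Ph_nl} and lies in $B_h$, hence satisfies the $\eps$-uniform error bound.

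Finally I would prove uniqueness of the fixed point in $B_h$. Given two solutions $u_h^{(1)}, u_h^{(2)} \in B_h$ of \eqref{Ph_nl}, their difference $\bar u_h = u_h^{(1)} - u_h^{(2)}$ solves \eqref{diff_eq} with right-hand side $-(k\bar u_h u_{ht}^{(2)})_t = -k\bar u_{ht}u_{ht}^{(2)} - k\bar u_h u_{htt}^{(2)}$, homogeneous initial data, and principal coefficient $1 + k u_h^{(1)}$ which is non-degenerate since $u_h^{(1)} \in B_h$. Testing with $\bar u_{ht}$ and using assumption \ref{Atwo} to drop the $\eps$-term, exactly as in the first step of the proof of Proposition~\ref{Prop:LinStability}, gives a Grönwall-type inequality
\[
\sup_{(0,t)} E_0[\bar u_h] \lesssim \int_0^t \bigl(\|k\alpha_{ht}^{(1)}\|_{\Linf} + |k|\|u_{ht}^{(2)}\|_{\Linf} + |k|\|u_{htt}^{(2)}\|_{\Linf}\bigr) E_0[\bar u_h]^{1/2}\|\bar u_{ht}\|_{\Ltwo}\,\mathrm d s,
\]
where the $L^\infty$-in-space norms of $u_{ht}^{(2)}, u_{htt}^{(2)}$ are finite (and bounded in terms of $\|u\|_{W^{3,1}(\Hr)}$ and $\bar h^{r-d/2}$ via the inverse inequality and the $B_h$-bound) and integrable in time; with zero initial data Grönwall forces $E_0[\bar u_h] \equiv 0$, so $\bar u_h = 0$. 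I expect the continuity step — specifically, justifying strong-in-time convergence of $u_h^{(j)}$ and $u_{ht}^{(j)}$ so as to pass to the limit in the quasilinear term while only weak-$*$ convergence is available for $\alpha_h^{(j)}$ — to be the main technical obstacle, since the right-hand side of the difference equation \eqref{diff_eq} is not regular enough for the higher-order energy $E_1$ and one is confined to the lower-order estimate; the finite-dimensionality of $V_h$ for fixed $h$ is what makes this tractable.
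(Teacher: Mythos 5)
Your proposal follows essentially the same route as the paper: Schauder/Tikhonov on the convex, weak-$*$ compact ball $B_h$, self-mapping via the linearized error estimate of Proposition~\ref{Prop:Error_est_lin} under the non-degeneracy and smallness conditions obtained from the inverse inequality, weak-$*$ sequential continuity identified through uniqueness of the linearized problem and a subsequence--subsequence argument, and uniqueness of the fixed point via the lower-order energy $E_0$ and Gr\"onwall. The one point to tighten is the passage to the limit in the term $k\alpha_h^{(j)}u_{htt}^{(j)}$, where $u_{htt}^{(j)}$ converges only weakly-$*$, so you need \emph{strong} convergence of $\alpha_h^{(j)}$ itself (the paper obtains it in $L^\infty(0,T;L^6(\Omega))$ and pairs it with a uniform $L^2(0,T;L^3(\Omega))$ bound on $u_{htt}^{(j)}$; in your setting it follows from the same Arzel\`a--Ascoli argument on the finite-dimensional $V_h$ that you invoke for $u_h^{(j)}$), not merely the weak-$*$ convergence you state.
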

\begin{proof}
As announced, we conduct the proof by checking the conditions of the Schauder's fixed-point theorem~\cite[Corollary 9.6]{zeidler1993nonlinear} combined with additionally proving uniqueness. \\
\indent We first prove that $\calF(\Bh) \subset \Bh$.  Take $\alphah \in \Bh$. As we intend to employ the error bound in Proposition~\ref{Prop:Error_est_lin} to conclude that $\mathcal{F}(\alphah) \in B_h$, we first check that $\alphah$ satisfies the assumptions of this proposition. \\
\indent To prove that $1+k\alphah$ does not degenerate, we rely on the properties of the interpolant combined with the inverse inequality in \eqref{inverse_ineq}, similarly to \eqref{bound_u0h}. In this manner, we obtain
\begin{equation} \label{non_degeneracy_alphah}
\begin{aligned}
&\|\alphah(t)\|_{\Linf}\\
\leq&\, \Cinv h^{-d/2}(\|I_h u(t)-u(t)\|_{\Ltwo}+\|u(t)-\alphah(t)\|_{\Ltwo})+\|I_h u(t)\|_{\Linf} 
\end{aligned}
\end{equation}
for all $t \in [0,T]$. Thus, if $\bar{h}$ and $u$ are sufficiently small so that
\begin{equation}
\vert k \vert \left\{\Cinv(\Capproxint+C_*)\bar{h}^{r-d/2}\|u\|_{W^{3,1}(H^r(\Omega))}+\Cstint \|u\|_{L^\infty(L^\infty(\Omega))}\right\} \leq \frac12,
\end{equation}
then 
\begin{equation} \label{non degeneracy}
 \frac12 \leq  1+k\alphah(x,t) \leq \frac32 \qquad \text{for all } (x,t) \in \overline{\Omega} \times [0,T].
\end{equation}
Similarly, we have the following bound: 
\begin{equation} \label{bound_alpha_der}
\begin{aligned}
&\vert k \vert(\|\alphaht\|_{L^1(\Linf)}+\|\alphahtt\|_{L^1(\Linf)})\\
\leq&\, \vert k \vert\left\{\Cinv(\Capproxint+C_*)\bar{h}^{r-d/2}\|u\|_{W^{3,1}(\Hr)}+\Cstint \|u\|_{W^{2,1}(\Linf)}\right\} \\
\lesssim&\, (1+C_*) \tilde{m},
\end{aligned}
\end{equation}
Thus condition \eqref{smallness_alphat} if fulfilled if $\tilde{m}$ is small enough. Therefore, we may apply Proposition~\ref{Prop:Error_est_lin}, which together with using that $\alphah \in B_h$ yields
	\begin{equation} \label{selfmap_est_}
\begin{aligned}
&\|u-\uh\|_{W^{2, \infty}(\Ltwo)} 
+ h \| \nabla (u -\uh)\|_{W^{1, \infty}(\Ltwo)}\\
	\leq& \, \begin{multlined}[t] \Clin(T)\Big \{ (1+\vert k \vert\|\alphaht\|_{L^1(L^\infty(\Omega))}+\vert k \vert\|\alphahtt\|_{L^1(L^\infty(\Omega))})h^{r}\|u\|_{W^{3,1}(\Hr)} \\  \hspace*{4cm}+\vert k \vert\|u-\alphah\|_{W^{2,\infty}(\Ltwo)} \|u\|_{W^{3,1}(L^\infty(\Omega))}\Big \}, \end{multlined} \\
\leq& \,  C(T) h^{r}\left (1+(1+C_* )\tilde{m}+\vert k \vert C_*\|u\|_{W^{3,1}(\Linf)}\right)\|u\|_{W^{3,1}(\Hr)}.
\end{aligned}
\end{equation}
We can further estimate 
\[
\vert k  \|u\|_{W^{3,1}(\Linf)} \leq \tilde{m}
\]
to arrive at
	\begin{equation} \label{selfmap_est}
\begin{aligned}
&\|u-\uh\|_{W^{2, \infty}(\Ltwo)} 
+ h \| \nabla (u -\uh)\|_{W^{1, \infty}(\Ltwo)}\\
\leq& \,  C(T)\left (1+(1+2C_* )\tilde{m}\right)h^{r}\|u\|_{W^{3,1}(\Hr)}.
\end{aligned}
\end{equation}
For the self-mapping property to hold, we then choose $\tilde{m}$ small enough and $C_*$ large enough so that
\begin{equation} \label{selfmap_condition}
	\begin{aligned}
	C(T) \left (1+(1+2C_*\tilde{m})\right) \leq C_*,
	\end{aligned}
\end{equation}
which guarantees that $\uh \in B_h$.
~\\

\noindent{\bf Continuity of the mapping.} To employ the generalized Schauder's theorem, we need to prove continuity of the mapping $\mathcal{F}$ in the weak-$*$ topology. Let $\{\alphah^{(n)}\}_{n \geq 1} \subset B_h$ be a sequence converging to $\alphah \in B_h$ in $X=W^{2, \infty}(0,T; L^2(\Omega)) \cap W^{1, \infty}(0,T; H_0^1(\Omega))$ as $n \rightarrow \infty$.  Then by the self-mapping property, we have 
\begin{equation} \label{sequence_in_Bh}
\uh^{(n)}:=\left \{\mathcal{F} \alphah^{(n)} \right\}_{n \geq 1} \subset \Bh.
\end{equation}
 We aim to prove convergence of the sequence $\{\uh^{(n)}\}_{n \geq 1}$ to $\uh:=\mathcal{F} \alphah$. As $\{\uh^{(n)}\}_{n \geq 1}$ is uniformly bounded in $X$, it has a subsequence (not relabeled) that converges to some $\tilde{u}_h \in \Bh$ in the following sense: 
	\begin{equation} 
\begin{alignedat}{4} 
\uh^{(n)} &\relbar\joinrel\rightharpoonup \tilde{u}_h && \quad \text{ weakly-$*$}  &&\text{ in } &&L^\infty(0,T; H_0^1(\Omega)),  \\
\uht^{(n)} &\relbar\joinrel\rightharpoonup \tilde{u}_{ht} && \quad \text{ weakly-$*$}  &&\text{ in } &&L^\infty(0,T; H_0^1(\Omega)),  \\
\uhtt^{(n)} &\relbar\joinrel\rightharpoonup \tilde{u}_{htt} && \quad \text{ weakly-$*$}  &&\text{ in } &&L^\infty(0,T; \Ltwo),
\end{alignedat} 
\end{equation}
as $n \rightarrow \infty$. Additionally, we have
\begin{equation} 
(1+\alphah^{(n)})\uhtt^{(n)} \relbar\joinrel\rightharpoonup (1+\alphah)\tilde{u}_{htt}  \quad \text{ weakly} \text{ in } L^2(0,T; \Ltwo).
\end{equation}
This follows by the strong convergence of $\{\alphah^{(n)}\}_{n \geq 1}$ in $L^\infty(0,T; L^6(\Omega))$ (due to the continuous embedding $H_0^1(\Omega)$ $\hookrightarrow$ $L^\ell(\Omega)$, $\ell \in [1,6]$) and the uniform bound on $\|\uhtt^{(n)}\|_{L^2(\Lthree)}$:
\begin{equation} \label{unif_bound_uhtt}
	\begin{aligned}
\|\uhtt^{(n)}\|_{L^2(\Lthree)} \leq&\,\CHonethree \|\nabla (\uhtt^{(n)}- \Ih \utt)\|_{L^2(\Ltwo)} + \|\Ih \utt\|_{L^2(\Lthree)}\\
\lesssim&\, \tCinv(\Capproxint+C_*)\bar{h}^{r-1}\|u\|_{W^{3,1}(\Hr)}+\Cstint \|u_{tt}\|_{L^2(\Lthree)},
\end{aligned}
\end{equation}
on account of also \eqref{inverse_ineq2}. Furthermore, $\{\frakK* \uht^{(n)}\}_{n \geq 1}$ is also uniformly bounded:
\[
\|\frakK* \uht^{(n)}\|_{L^2(\Hone)} \leq \|\frakK\|_{\calM(0,T)} \|\nabla \uht^{(n)}\|_{L^2(\Ltwo)}.
\]
Thus we have
	\begin{equation} 
\begin{alignedat}{4} 
\frakK* \uht^{(n)} &\relbar\joinrel\rightharpoonup \frakK* \tilde{u}_{ht} && \quad \text{ weakly}  &&\text{ in } &&L^2(0,T;  H_0^1(\Om)).
\end{alignedat} 
\end{equation}
This is sufficient to pass to the limit in $n$ in the equation solved by $\uh^{(n)}$ and prove that $\tilde{u}_h$ solves
 \begin{equation} 
\begin{aligned}
 (((1+k \alphah)\tilde{u}_{ht})_t, \phi_h)_{L^2}+(c^2\nabla \tilde{u}_h+\varepsilon  \frakK*\nabla \tilde{u}_{ht}, \nabla \phi_h)_{L^2}= (f, \phi_h)_{L^2}
 \end{aligned} 
 \end{equation}
 for all $\phi_h \in V_h$ a.e.\ in time. The attainment of approximate initial conditions follows by the definition of $B_h$. Then by the uniqueness of solutions to \eqref{Ph} and a subsequence-subsequence argument, we conclude that the whole sequence converges to $\tilde{u}_h \in B_h$ and that this limit must be equal to $\uh= \mathcal{F}(\alphah)$. \\
 \indent Schauder's theorem thus guarantees that there exists a fixed point of the mapping in $B_h$, which provides us with a solution of the nonlinear semi-discrete problem. \\

\noindent{\bf Uniqueness of the semi-discrete solution.} Let us assume that there are at least two fixed points $\uh^{(1)}$ and $\uh^{(2)}$ in $\Bh$. The difference $\bar{u}_h=\uh^{(1)}-\uh^{(2)}$ would solve \eqref{diff_eq}
with homogeneous initial data. Testing with $\bar{u}_{ht}$ and proceeding similarly to the proof of the lower-order linear stability bound \eqref{first_est} with the right-hand side $ -(k\bar{u}_h \uht^{(2)})_t $ leads to
\begin{equation}
\begin{aligned}
 &\|\bar{u}_{ht}(t)\|^2_{\Ltwo}+\|\nabla \bar{u}_{h}(t)\|^2_{\Ltwo}\\
  \lesssim&\, \vert k \vert \| \bar{u}_{ht} \uht^{(2)}+\bar{u}_h \uhtt^{(2)}\|^2_{L^1(0,t;\Ltwo)} \\
 \lesssim&\,  \vert k \vert \| \bar{u}_{ht}\|^2_{L^2(0,t;\Ltwo)} \|\uht^{(2)}\|^2_{L^2(\Linf)}+\|\bar{u}_h\|_{L^2(0,t;\Lsix)}^2\| \uhtt^{(2)}\|^2_{L^2(\Lthree)}. 
\end{aligned}
\end{equation}
Provided we have uniform bounds on $\|\uht^{(2)}\|_{L^2(\Linf)}$ and $\| \uhtt^{(2)}\|_{L^2(\Lthree)}$, from here we can employ Gr\"onwall's inequality to conclude that $\bar{u}_h=0$. The latter is given in \eqref{unif_bound_uhtt}. The bound on  $\|\uht^{(2)}\|_{L^2(\Linf)}$ can be obtained analogously to \eqref{non_degeneracy_alphah}. Indeed, we have
\begin{equation}
\|\uht^{(2)}\|_{L^2(\Linf)} \leq \Cinv(\Capproxint+C_*)\bar{h}^{r-d/2}\|u\|_{W^{3,1}(\Hr)}+\Cstint \|u_t\|_{L^2(\Linf)}.
\end{equation}
Thus the semi-discrete solution is unique, which concludes the proof.
\end{proof}
We note that using Agmon's interpolation inequality~\cite[Lemma 13.2, Ch.\ 13]{agmon2010lectures}: 
	\begin{equation}\label{unif_bound_energynorm}
\begin{aligned}
\|v\|_{\Linf} \leq C_{\textup{A}}\|v\|_{\Ltwo}^{1-d/4}\|v\|^{d/4}_{H^2(\Omega)}, \quad v \in H^2(\Omega) \cap H_0^1(\Omega), 
\end{aligned}
\end{equation}
the smallness assumption in \eqref{smallness} can be replaced by
\begin{equation} \label{smallness_new}
\begin{aligned}
\vert k \vert \left\{\bar{h}^{r-d/2}\|u\|^{1-d/4}_{W^{3,1}(\Hr)}+\|u\|^{1-d/4}_{W^{3,1}(\Ltwo)} \right\}\|u\|^{d/4}_{W^{3,1}(\Hr)} \leq \tilde{m},
\end{aligned}
\end{equation}
with $2 \leq r \leq p+1$. This smallness condition is further mitigated in practice by the fact that $|k|$ is relatively small as it is inversely proportional to the sound of speed squared; see, e.g.,~\cite[Ch.\ 5]{kaltenbacher2007numerical}.  \\
\indent We also state a corollary of the previous theory that will be useful in studying the limiting behavior (and obtaining the rate of convergence) of the semi-discrete solution as $\eps \rightarrow 0$.
\begin{corollary} \label{Corollary} Under the assumptions of Theorem~\ref{Thm:WellpNl}, the solution of \eqref{Ph_nl} satisfies
	\begin{equation}
		\|\uht\|_{L^\infty(\Linf)} +\reviseV{\|\nabla \uhtt\|_{L^2(\Ltwo)}} \leq C\|u\|_{W^{3,1}(\Hr)}, 
	\end{equation}
	where $C>0$ does not depend on $h$ or $\eps$. Furthermore,
	\begin{equation}
		 \tfrac12 \leq  1+k\uh(x,t) \leq \tfrac32 \qquad \text{for all } (x,t) \in \overline{\Omega} \times [0,T].
	\end{equation}
\end{corollary}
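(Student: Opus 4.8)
The plan is to obtain all three conclusions from the single observation that the semi-discrete solution $\uh$ of \eqref{Ph_nl} produced by Theorem~\ref{Thm:WellpNl} is a fixed point of $\mathcal{F}$ and therefore lies in the ball $\Bh$ of \eqref{ball_Bh}; in particular it satisfies the hypotheses of Proposition~\ref{Prop:Error_est_lin} with $\alpha_h=\uh$, which together with $\uh\in\Bh$ yields the uniform \emph{a priori} bound
\[
\|u-\uh\|_{W^{2,\infty}(\Ltwo)}+h\|\nabla(u-\uh)\|_{W^{1,\infty}(\Ltwo)}\leq C\,h^r\|u\|_{W^{3,1}(\Hr)},
\]
with $C$ independent of $h$ and $\eps$. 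The non-degeneracy statement is then free of charge: since $\uh\in\Bh$, the inequality $\tfrac12\leq 1+k\uh\leq\tfrac32$ is precisely the bound derived from \eqref{non_degeneracy_alphah} in the proof of Theorem~\ref{Thm:WellpNl} for an arbitrary member of $\Bh$ under the smallness hypothesis \eqref{smallness}.

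For the $L^\infty(0,T;\Linf)$ bound on $\uht$ I would repeat the interpolation argument behind \eqref{non_degeneracy_alphah}. Inserting $\Ih u_t(t)$, applying the inverse inequality \eqref{inverse_ineq} and the $L^q$-stability of $\Ih$ gives, for every $t\in[0,T]$,
\[
\|\uht(t)\|_{\Linf}\leq \Cinv h^{-d/2}\bigl(\|\uht(t)-u_t(t)\|_{\Ltwo}+\|u_t(t)-\Ih u_t(t)\|_{\Ltwo}\bigr)+\Cstint\|u_t(t)\|_{\Linf};
\]
then $\|\uht-u_t\|_{L^\infty(\Ltwo)}\leq C h^r\|u\|_{W^{3,1}(\Hr)}$ by the error estimate above, $\|u_t-\Ih u_t\|_{\Ltwo}\leq \Capproxint h^r\|u_t\|_{\Hr}$ by interpolation, and $h^{r-d/2}\leq\bar{h}^{r-d/2}$ since $r>d/2$. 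As $u\in\usolspace=W^{3,1}(0,T;\Hrzero)\hookrightarrow W^{2,\infty}(0,T;\Hr)$ and $\Hr\hookrightarrow\Linf$ for $r>d/2$, we also have $\|u_t\|_{L^\infty(\Linf)}\lesssim\|u\|_{W^{3,1}(\Hr)}$, and the claimed bound follows.

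The term $\|\nabla\uhtt\|_{L^2(\Ltwo)}$ is the one that requires care, and I expect it to be the main obstacle: applying \eqref{inverse_ineq2} directly to $\uhtt\in V_h$ (recall $\uh\in W^{3,1}(0,T;V_h)\hookrightarrow C^2([0,T];V_h)$) would only give $\tCinv h^{-1}\|\uhtt\|_{L^2(\Ltwo)}$, which degenerates as $h\to0$, and an energy argument is of no use since the dissipation term controls $\|\nabla\uhtt\|$ only in an $\eps$-weighted norm. The remedy is to route the gradient through the Ritz projection and cash in the $O(h^r)$ decay of the discrete error, exactly as in \eqref{unif_bound_uhtt}. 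Writing $\uhtt=(\uh-\Rh u)_{tt}+\Rh u_{tt}$ and using $(\uh-\Rh u)_{tt}\in V_h$, the inverse inequality \eqref{inverse_ineq2}, the error estimate above, and the $L^2$-approximation property of $\Rh$ give
\[
\|\nabla(\uh-\Rh u)_{tt}\|_{L^2(\Ltwo)}\leq \tCinv h^{-1}\|(\uh-\Rh u)_{tt}\|_{L^2(\Ltwo)}\lesssim_T \bar{h}^{r-1}\|u\|_{W^{3,1}(\Hr)},
\]
which is uniform in $h$ and $\eps$; moreover, since the Ritz projection is stable in the $H_0^1(\Omega)$-seminorm, $\|\nabla\Rh u_{tt}\|_{\Ltwo}\leq\|\nabla u_{tt}\|_{\Ltwo}$, whence $\|\nabla\Rh u_{tt}\|_{L^2(\Ltwo)}\lesssim_T\|u_{tt}\|_{L^\infty(\Hr)}\lesssim_T\|u\|_{W^{3,1}(\Hr)}$. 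Adding the two contributions gives $\|\nabla\uhtt\|_{L^2(\Ltwo)}\lesssim_T\|u\|_{W^{3,1}(\Hr)}$ with a constant depending only on $T$, which completes the proof.
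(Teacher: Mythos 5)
Your proposal is correct and follows essentially the same route as the paper: the corollary is read off from the fact that the fixed point $\uh$ lies in $B_h$, with the non-degeneracy coming from \eqref{non_degeneracy_alphah}, the $L^\infty(\Linf)$ bound on $\uht$ from the same interpolant-plus-inverse-inequality argument, and the gradient bound on $\uhtt$ from the add-and-subtract-a-projection trick with \eqref{inverse_ineq2}, exactly as in \eqref{unif_bound_uhtt}. The only (immaterial) difference is that you route the $\nabla\uhtt$ estimate through the Ritz projection and its $H_0^1$-stability, whereas the paper uses the interpolant $\Ih\utt$.
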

\section{Limiting behavior of semi-discrete solutions in the zero dissipation limit} \label{Sec:EpsLimit}
We have determined the conditions under which the finite-element semi-discretization is stable and accurate independently of the perturbation parameter $\eps$. We next discuss the limiting behavior of the perturbed semi-discrete problem as $\eps \rightarrow 0$, under the assumptions of Theorem~\ref{Thm:WellpNl}.\\
\indent Let $\{\uh^{\varepsilon}\}_{\varepsilon \in (0, \bar{\eps}]}$ be a family of solutions of the perturbed semi-discrete problem \eqref{Ph_nl} and let $\uh^{0}$ be the solution of the unperturbed semi-discrete problem given by
\begin{equation} \label{Ph_nl_zero} \tag{\ensuremath{{\bf {P}}_{h}^{0}}}
	\left\{ \begin{aligned}
		&(((1+k \uh^0)\uht^0)_t, \phi_h)_{L^2}+(c^2\nabla \psih^0, \nabla \phi_h)_{L^2}= (f, \phi_h)_{L^2}, \\[2mm]
		&\text{for all } \phi_h \in V_h \text{ a.e.\ in time},\\[2mm]
		&(\uh^0, \uht^0)\vert_{t=0} =(\Rh u_0, \Rh u_1).
	\end{aligned} \right.
\end{equation}
The difference ${w}_h^{\eps}= \uh^{\eps}-\uh^{0} $ solves
\begin{equation}\label{diff_eps_limit}
	(((1+k \uh^{\eps}) w^{\eps}_{ht})_t-c^2\Delta_h w^{\eps}_h+ k(w^{\eps}_h \uht^{0})_t, \phi_h)_{L^2}=(\eps \frakK* \Delta_h u^{\eps}_{ht}, \phi_h)_{L^2}
\end{equation}
{for all $\phi_h \in V_h$, with homogeneous initial data. We will test this problem with $w_{ht}^\eps$ to obtain an $\eps$-limiting result.
	\begin{theorem} \label{Thm:epsConv_lower} Let the assumptions of Theorem~\ref{Thm:WellpNl} hold. Then for fixed $h \in (0, \bar{h}]$ determined by Theorem~\ref{Thm:WellpNl}, the family $\{\uh^\eps\}_{\eps \in (0, \bar{\eps}]}$ of solutions to \eqref{Ph_nl} converges to the solution $\uh^0$ of \eqref{Ph_nl_zero} as $\eps \rightarrow 0$ in the following sense:
		\begin{equation}
			\begin{aligned}
				\|\uh^\eps-\uh^0\|_{L^\infty(\Ltwo)} +	\|\uht^\eps-\uht^0\|_{L^\infty(\Ltwo)}+	\|\nabla(\uh^\eps-\uh^0)\|_{L^\infty(\Ltwo)}\leq C \eps,
			\end{aligned}
		\end{equation}
		where the constant $C>0$ does not depend on $h$ or $\eps$.
	\end{theorem}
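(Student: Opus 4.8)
The plan is to test the difference equation \eqref{diff_eps_limit} with $\phi_h=w_{ht}^\eps$ and integrate over $(0,t)$, mimicking the lower-order energy step from the proof of Proposition~\ref{Prop:LinStability} (i.e.\ the derivation of \eqref{first_est}), but now tracking the $\eps$-dependent forcing on the right-hand side. The leading term $((1+k\uh^\eps)w_{ht}^\eps)_t$ tested with $w_{ht}^\eps$ produces, after integration by parts in time, $\frac12\|\sqrt{1+k\uh^\eps}\,w_{ht}^\eps(s)\|_{\Ltwo}^2\big|_0^t$ minus a commutator term $\frac12\int_0^t\!\int_\Omega k\,\uht^\eps (w_{ht}^\eps)^2\dxs$; the elliptic term $-c^2\Delta_h w_h^\eps$ gives $\frac{c^2}{2}\|\nabla w_h^\eps(s)\|_{\Ltwo}^2\big|_0^t$; and the nonlinear coupling $k(w_h^\eps\uht^0)_t=k w_{ht}^\eps\uht^0 + k w_h^\eps\uhtt^0$ is moved to the right. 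Since the initial data vanish, the left-hand side is exactly $\frac12\|\sqrt{1+k\uh^\eps}\,w_{ht}^\eps(t)\|_{\Ltwo}^2+\frac{c^2}{2}\|\nabla w_h^\eps(t)\|_{\Ltwo}^2$, and by Corollary~\ref{Corollary} the factor $1+k\uh^\eps$ is bounded below by $\tfrac12$, so this controls $E_0[w_h^\eps](t)$ up to the commutator and right-hand side contributions.

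Next I would estimate the three source contributions. For the $\eps$-term, I integrate by parts in space: $\eps(\frakK*\Delta_h u_{ht}^\eps, w_{ht}^\eps)_{L^2}=-\eps(\frakK*\nabla u_{ht}^\eps, \nabla w_{ht}^\eps)_{L^2}$, whose time integral is bounded by $\eps\|\frakK\|_{\calM(0,T)}\|\nabla\uht^\eps\|_{L^2(\Ltwo)}\,\|\nabla w_{ht}^\eps\|_{L^2(\Ltwo)}$ — but $\nabla w_{ht}^\eps$ is not controlled by $E_0$, so instead I would keep $\frakK*\nabla u_{ht}^\eps$ on the right and integrate by parts in time rather than space: write $\eps(\frakK*\Delta_h u_{ht}^\eps, w_{ht}^\eps)_{L^2}$ and integrate the whole term over $(0,t)$, then move one time derivative off $w_h^\eps$ is not available either. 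The clean route is: $\int_0^t \eps(\frakK*\nabla u_{ht}^\eps,\nabla w_{ht}^\eps)_{L^2}\ds = \eps(\frakK*\nabla u_{ht}^\eps,\nabla w_h^\eps)_{L^2}\big|_0^t - \eps\int_0^t ((\frakK*\nabla u_{ht}^\eps)_t,\nabla w_h^\eps)_{L^2}\ds$, using $w_h^\eps(0)=0$ and the differentiation formula \eqref{derivative_convolution}. The boundary term is bounded by $\eps\|\frakK\|_{\calM(0,T)}\|\nabla\uht^\eps\|_{L^\infty(\Ltwo)}\|\nabla w_h^\eps(t)\|_{\Ltwo}$, absorbable into $\frac{c^2}{4}\|\nabla w_h^\eps(t)\|_{\Ltwo}^2$ plus $C\eps^2$; the integral term is bounded by $\eps\,C(\|\frakK*\nabla\uhtt^\eps\|_{L^1(\Ltwo)}+\|\frakK\|_{\calM}\|\nabla\uht^\eps(0)\|_{\Ltwo})\|\nabla w_h^\eps\|_{L^\infty(0,t;\Ltwo)}$, again $\le \gamma\sup_{[0,t]}E_0[w_h^\eps]+C\eps^2$. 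All the $\uht^\eps,\uhtt^\eps$ norms appearing here are bounded uniformly in $h$ and $\eps$ by Corollary~\ref{Corollary} and \eqref{unif_bound_uhtt}. The commutator term and the $kw_{ht}^\eps\uht^0$ term are handled exactly as in Proposition~\ref{Prop:LinStability}: $|k|\|\uht^0\|_{L^\infty(\Linf)}\int_0^t\|w_{ht}^\eps\|_{\Ltwo}^2\ds$, giving a Grönwall-type factor (the smallness condition \eqref{smallness} makes the non-integrated piece harmless or, if kept, the Grönwall constant is simply $T$-dependent). The remaining term $kw_h^\eps\uhtt^0$ is estimated by $|k|\|w_h^\eps\|_{L^2(0,t;\Lsix)}\|\uhtt^0\|_{L^2(\Lthree)}$ and Poincaré to bring in $\|\nabla w_h^\eps\|_{L^2(0,t;\Ltwo)}$, once more absorbed into $E_0$ plus a Grönwall contribution.

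Collecting, one arrives at $\sup_{t\in(0,T)}E_0[w_h^\eps](t)\lesssim \eps^2 + \int_0^t E_0[w_h^\eps](s)\ds$, and Grönwall's inequality yields $\sup_{t\in(0,T)}E_0[w_h^\eps](t)\le C\eps^2$ with $C$ independent of $h$ and $\eps$. Recalling $E_0[w_h^\eps](t)=\|w_{ht}^\eps(t)\|_{\Ltwo}^2+\|\nabla w_h^\eps(t)\|_{\Ltwo}^2$, and using $w_h^\eps(0)=0$ together with $\|w_h^\eps(t)\|_{\Ltwo}\le\int_0^t\|w_{ht}^\eps\|_{\Ltwo}\ds\le T^{1/2}\|w_{ht}^\eps\|_{L^2(\Ltwo)}$ (or Poincaré), gives the stated bound on $\|w_h^\eps\|_{L^\infty(\Ltwo)}+\|w_{ht}^\eps\|_{L^\infty(\Ltwo)}+\|\nabla w_h^\eps\|_{L^\infty(\Ltwo)}$.

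The main obstacle I anticipate is the $\eps$-forcing term $\eps\,\frakK*\Delta_h u_{ht}^\eps$: since $\Delta_h u_{ht}^\eps\in V_h^*$ only, one must integrate by parts into $\nabla$-form and then trade a time derivative (not a space derivative) onto $w_h^\eps$ to avoid needing control of $\nabla w_{ht}^\eps$, which the lower-order energy $E_0$ does not provide. This forces the use of the differentiation formula \eqref{derivative_convolution} for $(\frakK*\nabla u_{ht}^\eps)_t$ and hence the uniform bound $\|\nabla\uhtt^\eps\|_{L^2(\Ltwo)}\le C\|u\|_{W^{3,1}(\Hr)}$ from Corollary~\ref{Corollary}, as well as uniform control of $\|\nabla\uht^\eps\|_{L^\infty(\Ltwo)}$ and $\|\nabla\uht^\eps(0)\|_{\Ltwo}$ — all of which are available but must be invoked carefully so that no constant depends on $h$ or $\eps$. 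A secondary subtlety, as in the linear stability proof, is ensuring the commutator and coupling terms containing $\uht^\eps$ and $\uhtt^0$ are absorbed either into the energy (via the non-degeneracy $1+k\uh^\eps\ge\tfrac12$ and $\gamma$-small Young splittings) or into a Grönwall loop, rather than producing an $\eps$-independent constant on the right-hand side.
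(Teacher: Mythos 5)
Your proposal is correct and follows essentially the same route as the paper: test \eqref{diff_eps_limit} with $w_{ht}^\eps$, handle the $\eps$-forcing by integrating by parts in time (not space) so that the time derivative lands on the convolution via \eqref{derivative_convolution} rather than on $w_h^\eps$, invoke the uniform bounds on $\|\nabla \uht^\eps\|_{L^\infty(\Ltwo)}$ and $\|\nabla \uhtt^\eps\|_{L^2(\Ltwo)}$ from Corollary~\ref{Corollary}, and close with a $\gamma$-small Young absorption plus Gr\"onwall. The obstacle you flag (lack of control of $\nabla w_{ht}^\eps$ by $E_0$) is precisely the one the paper's proof is built around, and your resolution is the same.
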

	\begin{proof}
		As announced, we test \eqref{diff_eps_limit} with $w_{ht}^\eps$ and proceed analogously to the proof of uniqueness in Theorem~\ref{Thm:WellpNl}. The new term here is the convolution term on the right-hand side. To treat it, we first  integrate by parts in time:
		\begin{equation}
			\begin{aligned}
				&-	\intt \eps( \frakK*\nabla u^{\eps}_{ht}, \nabla w^\eps_{ht})_{L^2} \ds \\
				=&\, - \eps( (\frakK*\nabla u^{\eps}_{ht} )(t), \nabla w^\eps_{h}(t))_{L^2}  +	\intt \eps( \frakK*\nabla u^{\eps}_{htt}+ \eta \frakK(t) \nabla u_{h1}, \nabla w^\eps_{h})_{L^2} \ds
			\end{aligned}
		\end{equation}		
	for $t \in [0,T]$,	where we have also used formula \eqref{derivative_convolution} and the fact that $w_{ht}^\eps(0)=0$. Recall that thanks to Theorem~\ref{Thm:WellpNl} and Corollary~\ref{Corollary}, we have the uniform bound 
	\begin{equation} \label{unif_bound}
	\|\nabla u^{\eps}_{ht}\|_{L^\infty(\Ltwo)}+\|\nabla \uhtt^\eps\|_{L^2(\Ltwo)} \leq C.
	\end{equation}
	 By H\"older's inequality and Young's convolution inequality, we find that	
		\begin{equation}
			\begin{aligned}
				& \left|	\intt \eps( \frakK*\nabla u^{\eps}_{ht}, \nabla w^\eps_{h})_{L^2} \ds \right | \\
				\leq&\, \begin{multlined}[t] \eps \|\frakK\|_{\calM(0,T)}\|\nabla u^{\eps}_{ht}\|_{L^\infty(\Ltwo)} \|\nabla w^\eps_{h}\|_{L^\infty(0,t;\Ltwo)}\\\hspace*{2.5cm}+ \eps \|\frakK\|_{\calM(0,T)} \|\nabla \uhtt^\eps\|_{L^2(\Ltwo)} \|\nabla w_h^\eps\|_{L^2(0,t;\Ltwo)}\\\hspace*{3.2cm}+ \eps \eta \|\frakK\|_{\calM(0,T)} \|\nabla u_{h1}\|_{\Ltwo} \|\nabla w_h^\eps\|_{L^\infty(0,t;\Ltwo)}.
				\end{multlined}
			\end{aligned}
		\end{equation}		
		An application of Young's inequality and using \eqref{unif_bound} lead to 
		\begin{equation}
			\begin{aligned}
				& \left|	\intt \eps( \frakK*\nabla u^{\eps}_{ht}, \nabla w^\eps_{h})_{L^2} \ds \right | \\
				\lesssim&\, \begin{multlined}[t] \eps^2 \|\frakK\|_{\calM(0,T)}^2 (1+ \|\nabla u_{h1}\|^2_{\Ltwo} ) +\gamma \|\nabla w_h^\eps\|^2_{L^\infty(0,t;\Ltwo)}+\|\nabla w_h^\eps\|^2_{L^2(0,t;\Ltwo)}
				\end{multlined}
			\end{aligned}
		\end{equation}		
		for any $\gamma>0$. Choosing $\gamma$ sufficiently small (independently of $h$ and $\eps$)  and otherwise proceeding as in the proof of uniqueness in Theorem~\ref{Thm:WellpNl} results in the bound
		\[
		\|\uht^\eps-\uht^0\|^2_{L^\infty(\Ltwo)}+	\|\nabla(\uh^\eps-\uh^0)\|^2_{L^\infty(\Ltwo)}\leq C \eps^2.
		\]
		Additionally relying on the fact that
		\[
		\|\uh^\eps-\uh^0\|_{L^\infty(\Ltwo)} \leq T  \|\uht^\eps-\uht^0\|_{L^\infty(\Ltwo)}
		\]
		completes the proof.
	\end{proof}
}
Theorem~\ref{Thm:epsConv_lower} \reviseV{mimics the $\eps$-limiting result in the continuous setting obtained in~\cite[Corollary 4.2]{kaltenbacher2022limiting}. It} establishes the conditions under which  semi-discrete solutions of \eqref{Ph_nl} preserve the limiting behavior of the exact solution as $\eps \rightarrow 0$. Additionally, it provides the error made when replacing \eqref{Ph_nl_zero} by \eqref{Ph_nl}. The latter information might prove to be useful in cases where artificial dissipation is added to the undamped problem. This manner of reducing non-physical dispersion effects is used in the simulation of nonlinear ultrasound phenomena; see, for example,~\cite[Section 3]{schenke2022explicit}.  The expected error in, for example, $L^\infty(0,T; \Ltwo)$ norm, is then
\[\
\begin{aligned}
	\|u^0-\uh^\eps\|_{L^\infty(\Ltwo)} \leq \|u^0-\uh^0\|_{L^\infty(\Ltwo)} +\|\uh^0-\uh^\eps\|_{L^\infty(\Ltwo)}  \leq C_1 h^r  +C_2 \eps.
\end{aligned}
\]
\section{Numerical experiments} \label{Sec:Numerics}
To conclude, we illustrate theoretical considerations numerically in one- and two-dimensional settings.

\begin{exmp}\label{FirstExample}
	\end{exmp} We first take the spatial domain to be $\Omega=(0, 0.5)$ and choose the right-hand side $f$ in the inviscid problem \eqref{P_nl_zero} such that the exact solution is given by
\begin{equation}
	u^0(x,t) = \sin(4 \pi x) \cos(4 \pi t).
\end{equation}
The initial conditions are then set to 
\begin{equation}
	u_0(x) = \sin(4 \pi x), \qquad u_1(x)=0.
\end{equation}
We choose the medium parameters as follows:
\begin{equation} \label{medium_parameters}
	c=1500, \quad k=-3 \cdot 10^{-9}.
\end{equation}
 Discretization in space is performed with continuous piecewise linear finite elements and implemented using MATLAB.  For time stepping, we use a Newmark predictor-corrector scheme in the mass effective form with the choice of the parameters $(\frac14, \frac12)$; see~\cite[Ch.\ 5]{kaltenbacher2007numerical} for details.\\
\indent In case of the time-fractional evolution, we follow the approach of~\cite{kaltenbacher2022fractional} and combine the Newmark method with an $L^1$-type scheme for the fractional derivative. The latter relies on the following approximation of the derivative of order $\alpha$ of $\uh^\eps$ at the time step $t_n$:
\begin{equation}
\begin{aligned}
\textup{D}_t^\alpha \uh^\eps(t_n) =&\, \frac{1}{\Gamma(1-\alpha)} \sum_{j=0}^{n-1}\int_{t_j}^{t_{j+1}} (t_n-s)^{-\alpha}\uht^\eps(s) \ds \\
\approx&\, \frac{1}{\Gamma(1-\alpha)} \sum_{j=0}^{n-1}\frac{\uht^\eps(t_{j+1})+\uht^\eps(t_j)}{2}\int_{t_j}^{t_{j+1}} (t_n-s)^{-\alpha} \ds \\
=&\, \frac{1}{2\Gamma(2-\alpha)} \sum_{j=0}^{n-1}\left(\uht^\eps(t_{j+1})+\uht^\eps(t_j)\right)\left((t_n-t_j)^{1-\alpha}-(t_n-t_{j+1})^{1-\alpha}\right);
\end{aligned}
\end{equation}
we refer to~\cite[Sec.\ 2.1]{kaltenbacher2022fractional} and~\cite{jin2019numerical} for details. \revisedd{R2}{We mention that an alternative approach to time stepping based on the trapezoidal rule and A-stable convolution quadrature within the framework of~\cite{lubich1986discretized, lubich1988convolution} has been developed and analyzed in \cite{baker2022numerical}.} The nonlinearity is resolved through a fixed-point iteration with the tolerance set to $10^{-8}$. The final time is set to $T=0.25$. \vspace*{2mm}

\noindent  We first conduct a sequence of simulations of \eqref{Ph_nl} for
\begin{equation}
\begin{aligned}
\eps \in \{0.5, 1, 2, 4, 8\} \times 10^{-6}.
\end{aligned}
\end{equation}
The spatial discretization parameter $h$ and time step $\Delta t$ are fixed beforehand, independently of $\eps$. We consider two cases: $\frakK$ being the Dirac delta $\delta_0$ and the Abel kernel \eqref{Abel_krenel} with $\alpha=0.6$ and compute the difference
\begin{equation}
\begin{aligned}
e_\eps=|\|u^0-\uh^\eps\|_{L^\infty(\Ltwo)}-\|u^0-\uh^0\|_{L^\infty(\Ltwo)}|.
\end{aligned}
\end{equation} 
\revisedd{R2}{Other values of $\alpha \in (0,1)$ resulted in the same empirical behavior, so we do not report the results of those experiments here}. By the previous theory, we know that
\begin{equation}
\textup{error}_\eps \leq \|\uh^0-\uh^\eps\|_{L^\infty(\Ltwo)} \leq C\eps.
\end{equation}
Experimentally obtained order of convergence is given in Table~\ref{Table1}, showing agreement with the predicted order $1$.\\
\begin{center}
	\begin{tabular}[h]{|c|c|c|}
		\hline
		&&\\[-3mm]
		$\eps$ &   $\textup{error}_\eps$ with $\frakK=\delta_0$  &   $\textup{error}_\eps$ with $\alpha=0.6$  \\[5pt]
		\hline\hline	
		$8 \times 10^{-6}$ &  -- & --\\[1mm]
			$4 \times 10^{-6}$ &    1.0006 & 0.9993\\[1mm]
			$2 \times 10^{-6}$ &  0.9986	& 1.0002\\[1mm]
		$1 \times 10^{-6}$ &1.0002 & 1.0016 \\[1mm]
			$0.5 \times 10^{-6}$ & 1.0079 & 0.9968 \\
		\hline
	\end{tabular}
	~\\[3mm]
	\captionof{table}{Experimental order of convergence with respect to $\eps$ for fixed $h$ in Example~\ref{FirstExample}} \label{Table1}  
\end{center} 
\noindent  We further perform a sequence of simulations of \eqref{Ph_nl} for $h = \frac{1}{2^i}$, $i \in \{6, \ldots, 10\}$ with
$\eps=h^2$
and $\Delta t << h$ fixed. The experimental order of convergence for
\[
\textup{error}_h=\|u^0-\uh^\eps\|_{L^\infty(\Ltwo)}, \quad \eps=h^2
\]
is reported in Table~\ref{Table2}. As the orders for $\frakK=\delta_0$ and the Abel kernel \eqref{Abel_krenel} with  $\alpha=0.6$ were the same, we only state the values for the latter. \\
\begin{center}
	\begin{tabular}[h]{|c|c|}
		\hline
		&\\[-3mm]
		$h$ &   $\textup{error}_h$ with $\alpha=0.6$  \\[5pt]
		\hline\hline	
	 1/64 &  -- \\[1mm]
	 1/128 &  1.9997\\[1mm]
	 1/256 & 1.9604	 \\[1mm]
	 1/512 &1.9038 \\[1mm]
	 1/1024 & 1.9841 \\
		\hline
	\end{tabular}
	~\\[3mm]
	\captionof{table}{Experimental order of convergence with respect to $h$ with $\eps=h^2$ in Example~\ref{FirstExample}} \label{Table2}  
\end{center} 
We see that the experimentally obtained order in Table~\ref{Table2} agrees with the theory in case $\eps=h^2$ and $r=2$:
\[
\textup{error}_h \leq \|u^0-\uh^0\|_{L^\infty(\Ltwo)}+\|\uh^0-\uh^\eps\|_{L^\infty(\Ltwo)} \leq C h^2.
\]

\begin{exmp}\label{SecondExample}
\end{exmp} We additionally numerically investigate asymptotic behavior of the discrete Westervelt equation in an application-oriented setting that is outside of our theory (due to the type of boundary excitation used) and in which the exact solution is not known. We take the spatial domain to be $\Omega=(0, 0.3)^2$ and excite the wave using Neumann boundary conditions:
\begin{equation}
	\begin{aligned}
	\frac{\partial u}{\partial n}=	g(t)= \begin{cases}
		\	\mathcal{A}\sin(w t), \ &\text{for }  x=0,\ 0.21 \leq y \leq 0.3, \\
		\	0, \ &\text{elsewhere on } \partial \Omega,
		\end{cases}
	\end{aligned}
\end{equation}
with the amplitude and angular frequency set to $\mathcal{A=}2 \cdot 10^3$ and $w=6 \pi \cdot 10^4 $, respectively. The initial conditions and source term are set to zero. The medium parameters are chosen as before in \eqref{medium_parameters} . We take here $\frakK$ to be the Dirac delta distribution $\delta_0$, resulting in the $-\eps \Delta_h \uht^\eps$ term in the simulated problem. \\
\indent Time stepping is carried out with the Newmark scheme with the parameters set to $(0.45, 0.75)$. This choice is common with high-frequency acoustic simulation; see, e.g.,~\cite[Section 7]{muhr2017isogeometric}. The final time is $T=1.4 \cdot10^{-4}$. Experiments are performed in FEniCSx~\cite{alnaes2014unified, alnaes2015fenics}, using continuous piecewise linear finite elements on triangles. Figure~\ref{Figure:snapshots} provides two snapshots of the ultrasonic wave as it propagates. We can observe steepening of the wave front, counteracted by the spreading of the wave into the domain. \\

\begin{figure}[h]
	\centering
	\begin{subfigure}{.5\textwidth}
		\centering
		\includegraphics[scale=0.35]{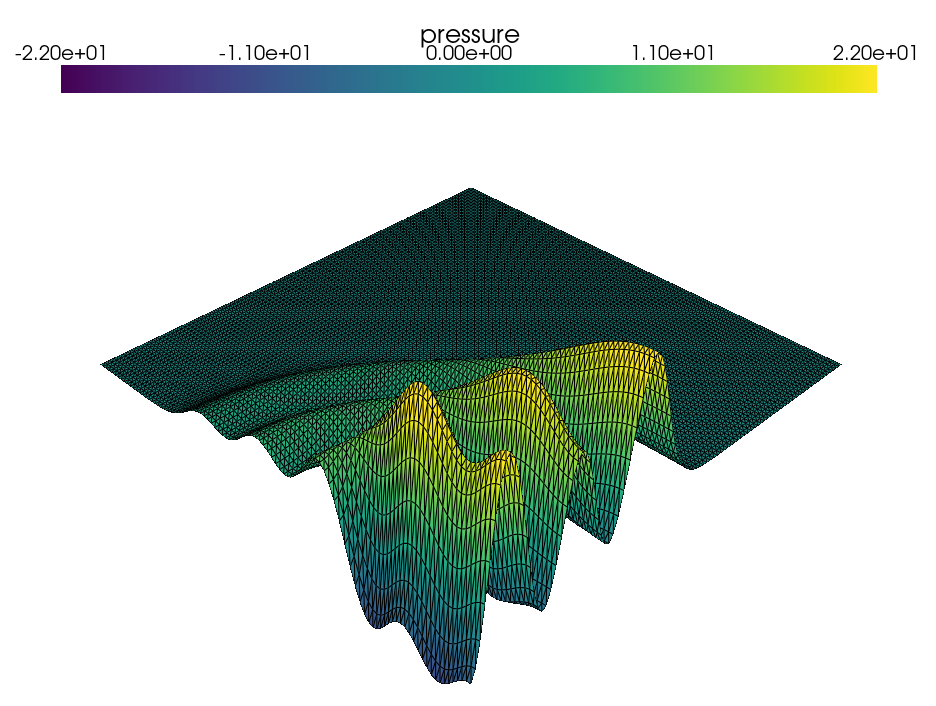}
		\caption{$t=t_1$}
	\end{subfigure}%
	\begin{subfigure}{.5\textwidth}
		\centering
	\raisebox{0.1cm}{\includegraphics[scale=0.35]{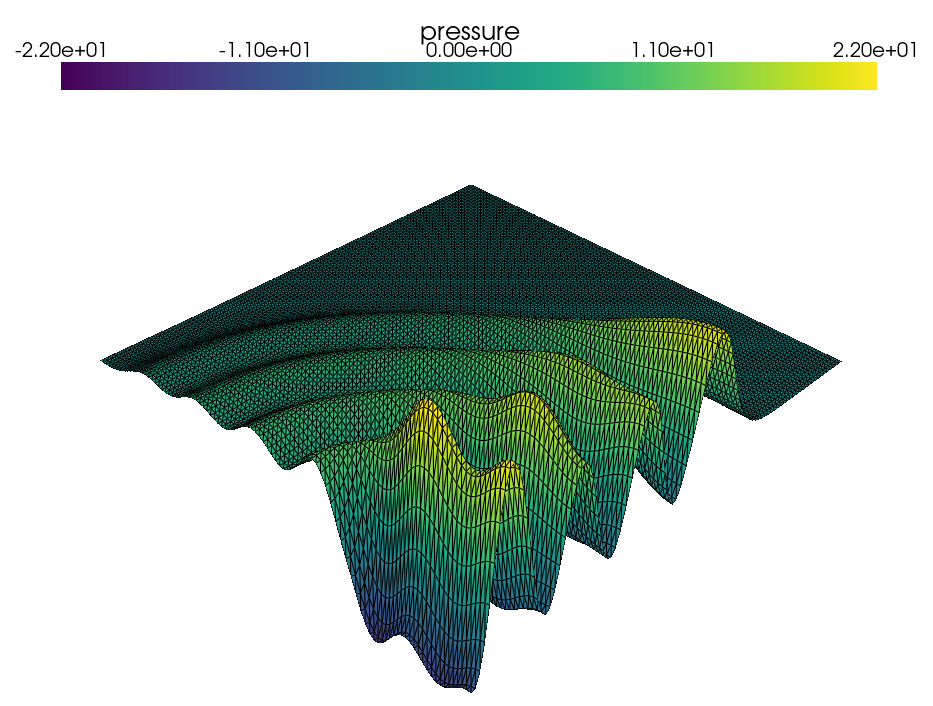}}
		\caption{$t=t_2$}
	\end{subfigure}
	\caption{Snapshots of the sound wave in Example~\ref{SecondExample} with $\eps=10^{-6}$}
	\label{Figure:snapshots}
\end{figure}
\begin{figure}[h]
	\centering
	\includegraphics[scale=0.5]{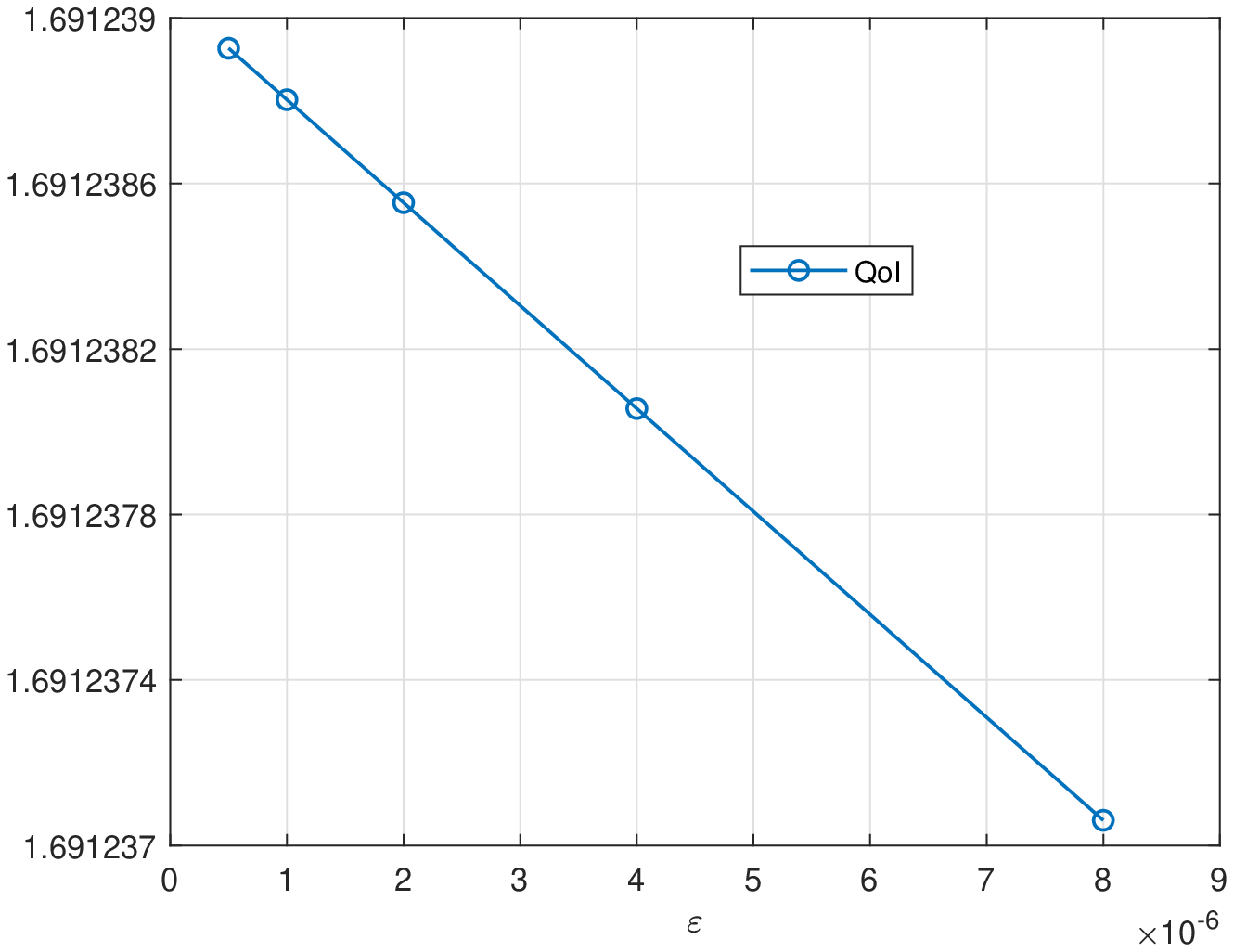}
	\caption{Plot of QoI$=\|\uh^\eps\|_{L^\infty(\Ltwo)}$  in Example~\ref{SecondExample} for different values of $\eps$ and fixed $h$}
	\label{Fig:ConvEps2D}
\end{figure}

\indent Since in this setting we do not have the exact solution at our disposal, we compute and plot a quantity of interest given by the $L^\infty(0,T; \Ltwo)$ norm of the pressure field:
\[
\text{QoI} = \|u_h^{\eps}\|_{L^\infty(\Ltwo)}.
\]
We fix the spatial and temporal discretization and vary the damping parameter: \[\eps \in \{1, 2, 4, 8\} \times 10^{-6}.\] Figure~\ref{Fig:ConvEps2D} shows linear dependence of the quantity of interest on this parameter, indicating that the $\eps$ asymptotics extends to this setting as well.

\section*{Conclusion and outlook}
In this work, we have determined sufficient conditions under which finite element discretizations of Westervelt-type equations with strong or time-nonlocal damping preserve the vanishing dissipation asymptotics. To this end, we have devised a testing strategy that led to an $\eps$-uniform energy bound for the semi-discrete solution, at first for a linearized problem and then, via a fixed-point argument, for the nonlinear one.  Under suitable smoothness and smallness conditions (see \eqref{smallness_new}), the resulting \emph{a priori} estimates remain valid as $\eps \rightarrow 0$, thus allowing to use the same discretization for the perturbed and unperturbed problems.  We have furthermore established the rate of convergence with respect to the perturbation parameter $\eps$, which is particularly informative in the context of introducing artificial diffusion to the inviscid problem.  \\
\indent Future work will be concerned with the analysis of fully discrete schemes in the asymptotic-preserving regime as well as considering enhanced models of nonlinear acoustics. In particular, the analysis of the Newmark algorithm used in the experiments would be challenging but worthwhile, where the results for the linear wave equation in~\cite{kaltenbacher2022fractional} could serve as a starting point. It is also of interest to investigate asymptotic behavior of the (semi-)discrete acoustic  models with quadratic gradient nonlinearities present in the nonlocal Kuznetsov and Blackstock equations~\cite{kaltenbacher2023limiting}.
\section*{Acknowledgments}  The author is thankful to Mostafa Meliani (Radboud University) for the careful reading of the manuscript and helpful comments. 
\begin{appendices} 
	\section{Solvability of the linear semi-discrete problem}	\label{Appendix:Semi-discrete}
	We present here the proof of existence of a unique solution of \eqref{Ph_nl} under the assumptions of Proposition~\ref{Prop:LinStability}. We only discuss the more involved case of the nonlocal equation with $\frakK \in L^1(0,T)$, as otherwise standard ODE theory can be used; see, e.g.,~\cite[Section 3]{kaltenbacher2022parabolic}. Given a basis $\{\phi_i\}_{i=1}^{n=n(h)}$ of $V_h$, we have
	\begin{equation}
	\begin{aligned}
	\uh(x,t) =\sum_{i=1}^{n=n(h)} \xi_i(t)\phi_i(x).
	\end{aligned}
\end{equation}
Denoting $\bfxi= [\xi_1 \ldots \xi_n]^T$, the semi-discrete problem can be rewritten in matrix form as follows:
	\begin{equation} \label{matrix_eq}
		\begin{aligned}
			\mathbb{M}_{1+k\alphah}(t)\boldsymbol{\xi}_{tt}+ c^2 \mathbb{K} \boldsymbol{\xi}+\eps \mathbb{K}\, \frakK*\boldsymbol{\xi}_t+	\mathbb{M}_{\alphaht}(t)\boldsymbol{\xi}_{t}= \boldsymbol{b}(t),
		\end{aligned}
	\end{equation}
	where the entries of the weighted mass matrices $\mathbb{M}_{1+k\alphah}(t)=[\mathbb{M}_{1+k\alphah, ij}]$, $\mathbb{M}_{\alphaht}(t)=[\mathbb{M}_{\alphaht, ij}]$,  and  the stiffness matrix $\mathbb{K}=[\mathbb{K}_{ij}]$ are given by
	\begin{equation} \label{matrices}
		\begin{aligned}
			 &\mathbb{M}_{1+k\alphah, ij}(t)= ((1+k \alphah(t))  \phi_i, \phi_j)_{L^2}, \quad  \mathbb{M}_{\alphaht, ij}(t)= (k \alphaht(t)  \phi_i, \phi_j)_{L^2},  \\[1mm]
			  &\mathbb{K}_{ij}= (\nabla \phi_i, \nabla \phi_j)_{L^2}.
		\end{aligned}
	\end{equation}
The entries of the right-hand side vector $\boldsymbol{b}=[\boldsymbol{b}_j]$ are
\[
\boldsymbol{b}_j(t)=(f(t), \phi_j)_{L^2}.
\]
Note that since $f \in W^{1,1}(0,T; \Ltwo) \hookrightarrow C([0,T]; \Ltwo)$, we have $\boldsymbol{b} \in C[0,T]$.	If	we  introduce the vectors of coordinates of the approximate initial data $(u_{0h}, u_{1h})$ in the basis: 
	\[
	\boldsymbol{\xi}_0=[\xi_{0,1} \ \xi_{0,2} \ \ldots \ \xi_{0,n}]^T, \quad \boldsymbol{\xi}_1=[\xi_{1,1} \ \xi_{1,2} \ \ldots \ \xi_{1,n}]^T,
	\]
	then by setting $\bfmu=\boldsymbol{\xi}_{tt}$, we have
	\begin{equation} \label{eq_xi}
		\bfxi_t(t)=1\Lconv\bfmu+\bfxi_1, \quad	\boldsymbol{\xi}(t)=\boldsymbol{\xi}_0 +t \boldsymbol{\xi}_1+1*1*\bfmu.
	\end{equation}
	Therefore, the semi-discrete problem can be rewritten as
	\begin{equation}
		\begin{aligned}
			\begin{multlined}[t]
				\mathbb{M}_{1+k\alphah}(t)\bfmu+ c^2 \mathbb{K}(\boldsymbol{\xi}_0 +t \boldsymbol{\xi}_1+1*1*\bfmu)+\eps \mathbb{K}\, \frakK*(1\Lconv\bfmu+\bfxi_1)\\ \hspace*{4cm}
				+\mathbb{M}_{\alphaht}(t)(1\Lconv\bfmu+\bfxi_1)=\ \boldsymbol{b}(t). \end{multlined}
		\end{aligned}
	\end{equation}
	Since $\mathbb{M}_{1+k\alphah}$ is positive definite due to the assumptions on $\alphah$, the semi-discrete problem can be seen as a system of Volterra integral equations 
	\begin{equation}
		\begin{aligned}
			\bfmu+\boldsymbol{K}*\bfmu(s)\ds = \boldsymbol{{f}}(t)
		\end{aligned}
	\end{equation}
	with the kernel
	\begin{equation}
	\begin{aligned}
		\boldsymbol{K}= \left\{\mathbb{M}_{1+k\alphah}(t)\right\}^{-1}\left\{c^2 \mathbb{K} 1*1+\eps \mathbb{K}\, \frakK*1+\mathbb{M}_{\alphaht}(t)1\right\}
	\end{aligned}	
		\end{equation}
	and the right-hand side
	\begin{equation}
	\begin{aligned}
	\boldsymbol{{f}}(t)=&\, \begin{multlined}[t]\left\{\mathbb{M}_{1+k\alphah}(t)\right\}^{-1}\left\{\boldsymbol{b}(t)- c^2 \mathbb{K}(\boldsymbol{\xi}_0 +t \boldsymbol{\xi}_1)-\eps \mathbb{K}\, \frakK*\bfxi_1-\mathbb{M}_{\alphaht}(t)\bfxi_1 \right\}.
	\end{multlined}
	\end{aligned}	
\end{equation}
Since $\alphah \in W^{2,1}(0,T; V_h) \hookrightarrow C^1([0,T]; V_h)$ and $\boldsymbol{b} \in C[0,T]$, we have
	\[
	\boldsymbol{K} \in C[0,T], \quad  \boldsymbol{{f}} \in C[0,T].
	\]
The existence theory for systems of Volterra integral equations of the second kind (see~\cite[Ch.\ 2, Theorem 4.2]{gripenberg1990volterra}) yields a unique $\bfmu \in L^\infty(0,T)$. Using $\bfxi_{tt}=\bfmu$ supplemented by the initial data, we conclude that there exists a unique $\bfxi \in W^{2, \infty}(0,T)$ and thus $\uh \in W^{2, \infty}(0,T; V_h)$. Owing to the higher regularity of the coefficient $\alphah$ and source term $f$ in time, an additional bootstrap argument shows that $\bfxi_{tt} \in W^{1,1}(0,T)$
and thus 
 $\uh \in W^{3,1}(0,T; V_h)$.
\end{appendices}
\bibliography{references}{}
\bibliographystyle{siam} 
\end{document}